\numberwithin{equation}{section}
\numberwithin{equation}{section}
\newtheorem{thm}{\indent Theorem}[section]
\newtheorem{cor}[thm]{\indent Corollary}
\newtheorem{lem}[thm]{\indent Lemma}
\newtheorem{rmk}{{\indent\bf Remark}}[section]
\newcommand{\mb}{\mbox}
\newcommand{\hs}{\hspace}
\newcommand{\dps}{\displaystyle}
\newcommand{\ttiny}{\fontsize{5pt}{\baselineskip}\selectfont}
\newcommand{\strl}[2]{\stackrel{\mbox{\ttiny $#1$}}{#2}}
\newcommand{\td}{\tilde}
\newcommand{\fr}{\frac}
\newcommand{\edd}{\end{document}}
\newcommand{\be}{\begin{equation}}
\newcommand{\ee}{\end{equation}}
\newcommand{\bsl}{\backslash}
\newcommand{\undbc}{\underbrace}
\newcommand{\lagl}{\langle}
\newcommand{\ragl}{\rangle}
\newcommand{\lmx}{\left(\begin{matrix}}
\newcommand{\rmx}{\end{matrix}\right)}
\newcommand{\ldt}{\left|\begin{matrix}}
\newcommand{\rdt}{\end{matrix}\right|}
\newcommand{\sgn}{{\rm Sgn\,}}
\newcommand{\tr}{{\rm tr\,}}
\newcommand{\veps}{\varepsilon}
\newcommand{\bbr}{{\mathbb R}}
\newcommand{\mo}{M\"obius }
\newcommand{\ba}{\begin{array}}
\newcommand{\ea}{\end{array}}
\newcommand{\nnm}{\nonumber}
\newcommand{\beal}{\begin{align}}
\newcommand{\eal}{\end{align}}
\newcommand{\bea}{\begin{eqnarray}}
\newcommand{\eea}{\end{eqnarray}}
\begin{document}

\title[Regular space-like hypersurfaces with parallel Blaschke tensorsin $\mathbb{S}^{m+1}_{1}$]{On the regular space-like hypersurfaces\\ in the de Sitter space $\mathbb{S}^{m+1}_{1}$ with parallel Blaschke tensors} 

\author[X. X. Li]{Xingxiao Li$^*$} 

\author[H. R. Song]{Hongru Song} 

\dedicatory{}

\subjclass[2000]{ 
Primary 53A30; Secondary 53B25. }
%
\keywords{ 
Conformal form, parallel Blaschke tensor, Conformal metric, Conformal second
fundamental form, maximal hypersurfaces, constant scalar curvature.}
\thanks{Research supported by
Foundation of Natural Sciences of China (No. 11171091, 11371018).}
\address{
School of Mathematics and Information Sciences
\endgraf Henan Normal University \endgraf Xinxiang 453007, Henan
\endgraf P.R. China}
\email{xxl$@$henannu.edu.cn}

\address{
School of Mathematics and Information Sciences
\endgraf Henan Normal University \endgraf Xinxiang 453007, Henan
\endgraf P.R. China} %
\email{yaozheng-shr$@$163.com}



\begin{abstract}
In this paper, we use two conformal non-homogeneous coordinate systems, modeled on the de Sitter space $\mathbb{S}^{m+1}_1$, to cover the conformal space $\mathbb{Q}^{m+1}_1$, so that the conformal geometry of regular space-like hypersurfaces in $\mathbb{Q}^{m+1}_1$ is treated as that of hypersurfaces in  $\mathbb{S}^{m+1}_1$. As a result, we give a complete classification of the regular space-like hypersurfaces (represented in the de Sitter space $\mathbb{S}^{m+1}_1$) with parallel Blaschke tensors.
\end{abstract}

\maketitle

\section{Introduction} 

Let $\bbr^{s+m}_s$ be the $(s+m)$-dimensional pseudo-Euclidean space which is the real vector space $\bbr^{s+m}$ equipped with the non-degenerate inner product $\lagl\cdot,\cdot\ragl_s$ given by
$$
\lagl X,Y\ragl_s=-X_1\cdot Y_1+X_2\cdot Y_2,\quad
X=(X_1,X_2),\,Y=(Y_1,Y_2)\in\bbr^s\times\bbr^m\equiv\bbr^{s+m}.
$$
where the dot ``$\cdot$'' is the standard Euclidean inner product either on $\bbr^{s}$ or on $\bbr^{m}$.

Denote by $\mathbb{RP}^{m+2}$ the real projection space of dimension $m+2$. Then the so called conformal space $\mathbb{Q}^{m+1}_1$ is defined as (\cite{ag1})
$$
\mathbb{Q}^{m+1}_1={\{[\xi]\in \mathbb{RP}^{m+2};\lagl \xi,\xi\ragl_2=0\}},
$$
while, for any $a>0$, the de Sitter space $\mathbb{S}^{m+1}_1(a)$ and the anti-de Sitter space $\mathbb{H}^{m+1}_1\left(-\fr1{a^2}\right)$ are defined respectively by
$$
\mathbb{S}^{m+1}_1(a)=\{\xi\in\bbr^{m+2}_1;\lagl \xi,\xi \ragl_1=a^2\},\quad \mathbb{H}^{m+1}_1\left(-\fr1{a^2}\right)=\{\xi\in\bbr^{m+2}_2;\lagl \xi,\xi \ragl_2=-a^2\}.
$$
Then $\mathbb{S}^{m+1}_1(a)$, $\mathbb{H}^{m+1}_1\left(-\fr1{a^2}\right)$ and the Lorentzian space $\bbr^{m+1}_1$ are called Lorentzian space forms. Denote $\mathbb{S}^{m+1}_1=\mathbb{S}^{m+1}_1(1)$ and $\mathbb{H}^{m+1}_1=\mathbb{H}^{m+1}_1\left(-1\right)$. Define three hyperplanes as follows:
\begin{align*}
\pi=&\{[x]\in \mathbb{Q}^{m+1}_1;x_1=x_{m+2}\},\\\pi_+=&\{[x]\in \mathbb{Q}^{m+1}_1;x_{m+2}=0\},\\ \pi_-=&\{[x]\in \mathbb{Q}^{m+1}_1;x_1=0\}.
\end{align*}
Then there are three conformal diffeomorphisms from the Lorentzian space forms into the conformal space:
\begin{equation}\label{eq1.1}
\begin{aligned}
&\sigma_0:\bbr^m_1\to \mathbb{Q}^{m+1}_1\bsl\pi,\quad u\longmapsto\left[\left(\lagl u,u\ragl_1-1,2u,\lagl u,u\ragl_1+1\right)\right],\\
&\sigma_1:\mathbb{S}^{m+1}_1\to \mathbb{Q}^{m+1}_1\bsl\pi_+,\quad u\longmapsto\left[\left(u,1\right)\right],\\
&\sigma_{-1}:\mathbb{H}^{m+1}_1\to \mathbb{Q}^{m+1}_1\bsl\pi_-,\quad u\longmapsto\left[\left(1,u\right)\right].
\end{aligned}
\end{equation}
Therefore $\mathbb{Q}^{m+1}_1$ is the common conformal compactification of $\bbr^{m+1}_1$, $\mathbb{S}^{m+1}_1$ and $\mathbb{H}^{m+1}_1$.

In the reference \cite{ag1}, Nie at al successfully set up a unified framework of conformal geometry for both regular surfaces and hypersurfaces in Lorentzian space forms by introducing the conformal space $\mathbb{Q}^{m+1}_1$ and some basic conformal invariants, including the conformal metric $g$, the conformal form $\Phi$, the Blaschke tensor $A$ and the conformal second fundamental form $B$. Later, all of these have been generalized to regular submanifolds of higher codimensions (\cite{ag3}). Under this framework, several characterization or classification theorems are obtained for hypersurfaces with some special conformal invariants, see for example (\cite{ag1},\cite{ag2}). The achievement of these certainly proves the efficiency of the above framework. In particular, as the main theorems, regular hypersurfaces with parallel conformal second fundamental forms, and conformal isotropic submanifolds are classified in \cite{ag1} and \cite{ag3}, respectively. Note that, a regular submanifold in the conformal space $\mathbb{Q}^{m+1}_1$ with vanishing conformal form is called conformal isotropic if its Blaschke tensor $A$ is parallel to the conformal metric. For the later use, we rewrite these two theorems applied in the special case of space-like hypersurfaces as follows:

\begin{thm}[\cite{ag1}]\label{nie1} Let $x:M^m\to \mathbb{Q}^{m+1}_1$
be a regular space-like hypersurface with parallel conformal second
fundamental form. Then $x$ is locally conformal equivalent to one of the
following hypersurfaces:
\begin{enumerate}
\item $\mathbb{H}^k\times \bbr^{m-k} \subset \bbr^{m+1}_1$, \quad $k=1,\cdots m-1$; or
\item $\mathbb{S}^{m-k}(a)\times\mathbb{H}^k\left(-\fr1{a^2-1}\right) \subset \mathbb{S}^{m+1}_1$ , $a>1, k=1,\cdots m-1$; or
\item $\mathbb{H}^k\left(-\fr1{a^2}\right)\times \mathbb{H}^{m-k}(-\fr1{1-a^2}) \subset \mathbb{H}^{m+1}_1$ , $0<a<1, k=1,\cdots m-1$; or
\item $WP(p,q,a)\subset \bbr^{m+1}_1$ for some constants $p,q,a$, as indicated in
Example $3.1$.
\end{enumerate}
\end{thm}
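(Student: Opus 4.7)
My approach to Theorem \ref{nie1} is to exploit the structure equations of the conformal geometry of hypersurfaces in $\mathbb{Q}^{m+1}_1$, and then to reduce the problem to a de Rham-type decomposition argument. In the framework of \cite{ag1}, the basic conformal invariants $(g,\Phi,A,B)$ satisfy Gauss--Codazzi--Ricci type equations in which the covariant derivatives of $A$ and $B$ are coupled to $\Phi$, and the Codazzi-type equation for $B$ contains a term involving the conformal form. Writing out these integrability conditions and imposing $\nabla B=0$, I would first deduce that $\Phi=0$ identically, and then that the induced equation for $A$ becomes a Codazzi-type identity which forces $A$ and $B$ to commute.

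\textbf{Simultaneous diagonalization and splitting.} Since $B$ is parallel with respect to the Levi-Civita connection of $g$, its eigenvalues are locally constant and its eigenspaces $E_1,\dots,E_r$ form smooth, mutually orthogonal, parallel distributions on $(M^m,g)$. The commutativity of $A$ and $B$, combined with $\Phi=0$ and the conformal Gauss equation, should force $A$ to act as a constant multiple of the identity on each $E_i$, so that both $A$ and $B$ have constant eigenvalues with constant multiplicities. A de Rham-type argument for the Riemannian manifold $(M^m,g)$ then locally decomposes it as a product
\begin{equation*}
(M^m,g)=(M_1^{m_1},g_1)\times\cdots\times(M_r^{m_r},g_r),
\end{equation*}
and by the general construction recalled in Section 1 the original hypersurface restricts to each factor as a totally umbilical submanifold, of constant mean curvature, in a Lorentzian space form selected by the sign of the corresponding eigenvalue of $A$.

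\textbf{Identifying the models.} Each factor is then an open piece of a round sphere, a hyperbolic space, or a Euclidean/Lorentzian plane of the appropriate signature. The Lorentzian signature of the ambient $\mathbb{Q}^{m+1}_1$ forces the time-like direction to lie in exactly one factor, which limits the combinatorics. Using the conformal diffeomorphisms $\sigma_0,\sigma_1,\sigma_{-1}$ in \eqref{eq1.1} and pulling the product back to $\mathbb{Q}^{m+1}_1$ then yields exactly the three product families \textrm{(1)}--\textrm{(3)} when $B$ has two distinct eigenvalues and the time-like eigenspace fits naturally inside a hyperbolic factor. The exceptional family \textrm{(4)}, the warped product $WP(p,q,a)\subset \bbr^{m+1}_1$ of Example 3.1, arises from the remaining configuration that cannot be realized as a direct product of umbilical slices of $\mathbb{S}^{m+1}_1$ or $\mathbb{H}^{m+1}_1$, but only as a warped construction inside $\bbr^{m+1}_1$ up to conformal equivalence.

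\textbf{Main obstacle.} The principal difficulty is not the decomposition itself but proving exhaustiveness of the list. Case \textrm{(4)} does not come from a standard product; one must start from the eigenvalue data of $(A,B)$, integrate the structure equations by hand in the Lorentzian ambient $\bbr^{m+1}_1$, and show that the resulting embedding is conformally equivalent to the warped construction $WP(p,q,a)$. Equally delicate is the bookkeeping that eliminates duplicates among the four families after identification via $\sigma_0,\sigma_{\pm 1}$, ensuring that no genuinely new example with parallel $B$ has been overlooked.
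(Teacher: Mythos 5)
You should first note that the paper you are working from does not prove Theorem \ref{nie1} at all: it is quoted verbatim from \cite{ag1} (Nie--Wu) and used as a black box in the proof of the main theorem, so there is no in-paper argument to measure your proposal against. Judged on its own, your opening steps are sound and standard: with $\nabla B=0$ the Codazzi-type identity \eqref{Bijk} gives $\delta_{ij}\Phi_k=\delta_{ik}\Phi_j$ and hence $\Phi\equiv 0$, and then \eqref{Phiij} gives $[A,B]=0$; parallelism of $B$ makes its eigenvalues constant and its eigendistributions parallel, so a de Rham splitting of $(M^m,g)$ is available. Note also that $\tr B=0$ and $|B|^2=(m-1)/m$ from \eqref{trB} force $B$ to have at least two distinct eigenvalues, so the splitting is never trivial.

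However, the proposal has genuine gaps beyond that point. The claim that $A$ ``acts as a constant multiple of the identity on each $E_i$'' is asserted, not proved; the correct route is to use the vanishing of the mixed curvatures $R_{ijji}$ across different parallel eigendistributions together with the conformal Gauss equation \eqref{Rijkl}, which yields $A_i+A_j=\mu_\alpha\mu_\beta$ for $i,j$ in distinct blocks, and even then the case of exactly two blocks only pins down the sum $A_i+A_j$, so extra input (e.g.\ the trace identity \eqref{trA} or the Codazzi equation for $A$) is needed to conclude. The statement that each factor sits as a totally umbilical, constant-mean-curvature piece of a space form is also unjustified: in the models (1)--(3) the factors are space forms but in case (4) the hypersurface $WP(p,q,a)$ has \emph{three} distinct conformal principal curvatures, and the actual dichotomy in the classification is governed by the number of distinct eigenvalues of $B$ (two versus three), not by a failure of the product to embed. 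Finally, you yourself flag that the exhaustiveness argument --- integrating the structure equations in the three-eigenvalue case to land exactly on $WP(p,q,a)$ and ruling out any further configurations --- is the hard part and is left undone. As it stands the proposal is a plausible outline of the strategy of \cite{ag1}, not a proof.
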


\begin{thm}[\cite{ag3}]\label{nie2}
Any regular, space-like and conformal isotropic hypersurface in $\mathbb{Q}^{m+1}_1$ is conformal equivalent to a maximal, space-like and regular hypersurface in $\bbr^{m+1}_1$, $\mathbb{S}^{m+1}_1$ or $\mathbb{H}^{m+1}_1$ with constant scalar curvature.
\end{thm}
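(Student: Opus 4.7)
The plan is to reduce the problem in three stages: show that the proportionality function $\lambda$ in $A=\lambda g$ must in fact be a constant; use the value of that constant to select one of the three conformal diffeomorphisms in \eqref{eq1.1} and transfer the hypersurface to a Lorentzian space form; and finally translate the remaining conformal conditions into maximality together with constant scalar curvature in the chosen ambient.

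For the first stage, I would invoke the Codazzi-type identity for the Blaschke tensor, which for a regular space-like hypersurface in $\mathbb{Q}^{m+1}_1$ has the schematic form
\begin{equation*}
A_{ij,k}-A_{ik,j}=\Phi_i B_{jk}-\Phi_j B_{ik}.
\end{equation*}
Substituting $\Phi=0$ and $A=\lambda g$ gives $(\nabla_k\lambda)g_{ij}=(\nabla_j\lambda)g_{ik}$; a trace in $(i,j)$ combined with $m\geq 2$ then forces $\nabla\lambda\equiv 0$, so $\lambda$ is constant on each connected component of $M$.

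For the second stage, the value of this constant $\lambda$ dictates which of $\sigma_0^{-1}$, $\sigma_1^{-1}$, $\sigma_{-1}^{-1}$ to apply on a neighborhood of each point, producing a local space-like immersion $\td x$ into $\bbr^{m+1}_1$, $\mathbb{S}^{m+1}_1$, or $\mathbb{H}^{m+1}_1$ respectively (the sign correspondence being governed by the conformal factors coming from the formulas in \eqref{eq1.1}). Using those explicit expressions one rewrites the conformal invariants $g,\Phi,A,B$ in terms of the induced metric $I$, the mean curvature $H$ and the Lorentzian second fundamental form of $\td x$. The mechanical core of the proof then reduces to a careful tracking of conformal factors: one checks that $\Phi=0$ together with $A=\lambda g$ collapses to the single pointwise condition $H\equiv 0$, so that $\td x$ is maximal.

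For the third stage, once $\td x$ is known to be maximal in a Lorentzian space form of constant sectional curvature $c$, the defining formula for $A$ reduces to a combination of the Ricci tensor of $\td x$, the squared norm of its second fundamental form, and the constant $c$. Imposing $A=\lambda g$ with $\lambda$ constant then pins the Ricci tensor of $\td x$ to a constant multiple of the induced metric modulo a constant term; a trace yields constant scalar curvature. The main obstacle I expect is the coordinate computation in the second stage, in particular verifying the precise correspondence between the sign of $\lambda$ and the choice of ambient space form and confirming that the combined conditions $\Phi=0$ and $A=\lambda g$ really force $H\equiv 0$ with the correct constants; once these are in place, passage from the local conformal equivalence to a global one follows by extension along $M$ using the uniqueness of the conformal invariants on a connected regular hypersurface.
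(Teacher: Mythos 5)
Note first that the paper does not prove this statement itself: it is quoted from \cite{ag3}, so your proposal has to be measured against the standard argument for conformal isotropic hypersurfaces. Your first stage is correct and is the standard one: with $\Phi\equiv0$ the Codazzi identity \eqref{Aijk} applied to $A=\lambda g$ gives $E_k(\lambda)\delta_{ij}=E_j(\lambda)\delta_{ik}$, and choosing $i=j\neq k$ (possible since $m\geq2$) forces $\lambda$ to be constant. Your third stage is also essentially right \emph{once the correct conformal representative is in hand}.

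The genuine gap is in your second stage. You claim that after transferring the hypersurface to a Lorentzian space form by one of $\sigma_0^{-1},\sigma_1^{-1},\sigma_{-1}^{-1}$, the conditions $\Phi=0$ and $A=\lambda g$ ``collapse to the single pointwise condition $H\equiv0$''. This cannot work as stated: $\Phi$ and $A$ are conformal invariants, so \emph{every} conformal representative of the hypersurface satisfies $\Phi=0$ and $A=\lambda g$, yet a generic conformal (non-isometric) image of a maximal hypersurface in, say, $\mathbb{S}^{m+1}_1$ is not maximal. Hence $H\equiv0$ cannot be a pointwise consequence of the invariants for an arbitrary representative, and your proposal contains no mechanism for selecting the right one. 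The missing idea is the constant vector: once $\lambda$ is constant and $\phi=0$, the structure equations \eqref{eq2.5} give $dN=\sum\psi_iY_i=\lambda\sum\omega^iY_i=\lambda\,dY$, so $c:=N-\lambda Y$ is a constant vector of $\bbr^{m+3}_2$ with $\lagl c,c\ragl_2=-2\lambda$, $\lagl Y,c\ragl_2=1$ and $\lagl\xi,c\ragl_2=0$. The causal character of $c$, i.e. the sign of $\lambda$, is what selects the target space form; applying an element of $O(m+3,2)$ that carries $c$ to a standard position places $Y$ in the corresponding affine section of the light cone, and only for \emph{that} representative does $\lagl\xi,c\ragl_2=0$ translate into $H\equiv0$. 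With this normalization your stage three then closes the argument (maximality, the Gauss equation, and the constancy of $\tr A$, equivalently of the conformal factor, yield constant scalar curvature).
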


We remark that a \mo classification of umbilic-free hypersurfaces in the unit sphere with parallel \mo second fundamental forms has been established in \cite{hl04}.

Motivated by the above theorems, we aims in the present paper at a complete classification of regular space-like hypersurfaces in $\mathbb{Q}^{m+1}_1$ with parallel Blaschke tensors. To this end, we would like to make a direct use of the ideas and technics with which we previously studied the M\"obius geometry of umbilc-free hypersurfaces in the unit sphere(\cite{hlz}, \cite{lz1}, \cite{lz2} and \cite{lz3}). So we firstly define two conformal non-homogeneous coordinate systems (with the coordinate maps $\strl{(1)}{\Psi}$, $\strl{(2)}{\Psi}$ respectively) covering the conformal space $\mathbb{Q}^{m+1}_1$, which are modeled on the de Sitter space $\mathbb{S}^{m+1}_1$, so that the conformal geometry of the hypersurfaces in  $\mathbb{Q}^{m+1}_1$ corresponds right to that of the hypersurfaces in the de Sitter space. It follows that the conformal geometry of regular hypersurfaces in each of $\mathbb{H}^{m+1}_1$ and $\mathbb{R}^{m+1}_1$ is made unified with that in $\mathbb{S}^{m+1}_1$. This shows that we only need to consider and study the conformal invariants of the hypersurfaces in $\mathbb{S}^{m+1}_1$ which plays the same role as the unit sphere does in the M\"obius geometry of umbilic-free submanifolds. With this consideration, we only focus here on the study of the conformal invariants of regular space-like hypersurfaces in the de Sitter space $\mathbb{S}^{m+1}_1$. As a result, we are able to establish a complete classification for all the regular space-like hypersurfaces with parallel Blaschke tensors.

Note that the above two conformal non-homogeneous coordinate maps $\strl{(1)}{\Psi}$ and $\strl{(2)}{\Psi}$ are conformal equivalent whereon both of them are defined. Therefore we can use $\Psi$ to denote either one of $\strl{(1)}{\Psi}$ and $\strl{(2)}{\Psi}$. By this, the main theorem of the present paper is stated as follows:

\begin{thm}\label{main} Let $x:M^m\to \mathbb{S}^{m+1}_1$, $m\geq 2$, be a regular space-like hypersurface. If the Blaschke tensor $A$ of $x$ is
parallel, then one of the following holds:
\begin{enumerate}
\item $x$ is conformal isotropic and thus is locally conformal equivalent
to a maximal space-like regular hypersurface in $\mathbb{S}^{m+1}_1$ with
constant scalar curvature, or the conformal image under $\Psi\circ\sigma_{-1}$ of a maximal regular hypersurface in $\mathbb{H}^{m+1}_1$ with
constant scalar curvature, or the conformal image under $\Psi\circ\sigma_0$ of a maximal regular hypersurface in $\mathbb{R}^{m+1}_1$ with
constant scalar curvature;
\item $x$ is of parallel conformal second fundamental form $B$ and thus is
locally conformal equivalent to
\begin{enumerate}
\item the image under $\Psi\circ\sigma_0$ of $\mathbb{H}^k\times \bbr^{m-k} \subset \bbr^{m+1}_1$, \quad $k=1,\cdots m-1$; or
\item $\mathbb{S}^{m-k}(a)\times\mathbb{H}^k\left(-\fr1{a^2-1}\right) \subset \mathbb{S}^{m+1}_1$ , $a>1, k=1,\cdots m-1$; or
\item the image under $\Psi\circ\sigma_{-1}$ of $\mathbb{H}^k\left(-\fr1{a^2}\right)\times \mathbb{H}^{m-k}(-\fr1{1-a^2}) \subset \mathbb{H}^{m+1}_1$ , $0<a<1, k=1,\cdots m-1$; or
\item $WP(p,q,a)\subset \bbr^{m+1}_1$ for some constants $p,q,a$.
\end{enumerate}
\item $x$ is non-isotropic with a non-parallel conformal second
fundamental form $B$ and is locally conformal equivalent to
\begin{enumerate}
\item one of the maximal hypersurfaces as indicated in
Example \ref{expl3.2}, or
\item one of the non-maximal hypersurfaces as indicated in
Example \ref{expl3.3}.
\end{enumerate}
\end{enumerate}
\end{thm}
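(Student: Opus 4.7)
The plan is to work entirely with the local invariants of a regular space-like hypersurface $x:M^m\to\mathbb{S}^{m+1}_1$---the conformal metric $g$, the conformal form $\Phi$, the Blaschke tensor $A$ and the conformal second fundamental form $B$---in a local orthonormal frame $\{E_i\}$ adapted to the eigenspace decomposition of $A$. Writing $A=\sum A_{ij}\omega^i\otimes\omega^j$, $B=\sum B_{ij}\omega^i\otimes\omega^j$ and $\Phi=\sum C_i\omega^i$, the hypothesis $\nabla A=0$ immediately forces the eigenvalues $\lambda_1,\ldots,\lambda_s$ of $A$ to be constant on $M$ and the mutually orthogonal eigenspace distributions $V_1,\ldots,V_s$ to be smooth and $\nabla$-parallel. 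The central algebraic input will be the Codazzi-type identity for $A$ in conformal geometry,
\be
A_{ij,k}-A_{ik,j}=B_{ij}C_k-B_{ik}C_j,
\ee
which under $\nabla A=0$ degenerates to the pointwise relation $B_{ij}C_k=B_{ik}C_j$ for all $i,j,k$. This relation, together with the parallelism of the $V_\alpha$, will drive the whole trichotomy.

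Having set this up, I split by the spectrum of $A$. First, if $A$ has only one eigenvalue, then $A=\lambda g$; combined with the identity above and the normalization conditions of the conformal framework, this forces $\Phi\equiv 0$, so $x$ is conformal isotropic and conclusion (1) follows at once from Theorem \ref{nie2}, after using the conformal non-homogeneous coordinate maps $\Psi\circ\sigma_0$ and $\Psi\circ\sigma_{-1}$ to transport the maximal hypersurfaces of $\bbr^{m+1}_1$ and $\mathbb{H}^{m+1}_1$ into $\mathbb{S}^{m+1}_1$. Second, if $A$ has at least two distinct eigenvalues, the relation $B_{ij}C_k=B_{ik}C_j$ together with the $\nabla$-parallelism of each $V_\alpha$ shows that $B$ must be block-diagonal with respect to the decomposition $TM=\oplus V_\alpha$, and that its off-conformal-isotropic part has a very restricted rank structure.

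If moreover $\nabla B=0$, then $x$ has parallel conformal second fundamental form and Theorem \ref{nie1} produces exactly the four families in statement (2), again transferred into $\mathbb{S}^{m+1}_1$ through $\Psi\circ\sigma_0$ and $\Psi\circ\sigma_{-1}$ where necessary. The remaining and hardest case is $\nabla B\neq 0$. Here the analysis of $B_{ij}C_k=B_{ik}C_j$ forces $\Phi\ne 0$ and pins down the block structure of $B$ to a one-parameter family per block. Using the local de Rham-type decomposition of $(M,g)$ induced by the parallel eigenspaces $V_\alpha$, together with the full conformal Gauss and Codazzi equations, I would reduce the structure equations to a system of ordinary differential equations for the eigenvalues of $B$ and the components $C_i$ along the one-dimensional leaves transverse to $\ker\Phi$. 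Integrating this ODE system yields a warped-product description of $x$ which, according to whether the conformal-mean-curvature-type trace vanishes, matches either the maximal family of Example \ref{expl3.2} or the non-maximal family of Example \ref{expl3.3}, giving statement (3).

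The main obstacle is this last case. The parallelism of $A$, the non-parallelism of $B$, and the non-vanishing of $\Phi$ must be simultaneously reconciled through the full set of conformal structure equations, and one has to extract not only the warped-product decomposition of $g$ but also an explicit first integral whose solution curves reproduce the two example families without producing any spurious branch. Verifying that every solution arising from the integration procedure in fact lies in one of the listed examples---and not in a genuinely broader family---is where the bulk of the technical work will lie.
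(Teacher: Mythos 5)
There is a genuine gap, and it sits exactly where you place the ``bulk of the technical work.'' Your plan for the hardest case (non-parallel $B$) asserts that the degenerate Codazzi relation $B_{ij}C_k=B_{ik}C_j$ ``forces $\Phi\neq 0$'' and then proposes to integrate ODEs along leaves transverse to $\ker\Phi$. This is false: whenever $A$ is parallel, $\Phi$ vanishes \emph{identically}, with no hypothesis on the spectrum of $A$ or on $\nabla B$. Indeed, diagonalizing $B$ at a point, the relation $B_i(\delta_{ij}\Phi_k-\delta_{ik}\Phi_j)=0$ combined with the normalizations $\tr B=0$ and $|B|^2=\fr{m-1}{m}$ from \eqref{trB} guarantees at least two nonzero eigenvalues $B_{i_1},B_{i_2}$, and choosing indices appropriately kills every $\Phi_i$ (this is Lemma \ref{lem4.1} of the paper). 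So your case (3) strategy is built on a vacuous foliation, and your derivation of block-diagonality of $B$ from $B_{ij}C_k=B_{ik}C_j$ is likewise empty once $C\equiv 0$; the block structure actually comes from the Ricci identity \eqref{Phiij}, which with $\Phi\equiv 0$ gives $B_{ij}(A_j-A_i)=0$.

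What the non-parallel-$B$ case actually requires is a spectral pinching of $A$ rather than an ODE analysis. From $\omega^i_j=0$ across distinct eigenspaces of $A$ and the Gauss equation \eqref{Rijkl} one gets the key algebraic relation $-B_iB_j+A_i+A_j=0$ whenever $A_i\neq A_j$; playing this off against itself shows that if $A$ has $t\geq 3$ distinct eigenvalues then the conformal principal curvatures are constant and $B$ is parallel (and in fact $t=3$). Hence non-parallel $B$ forces $t=2$, $\lambda_1=-\lambda_2$, and $B$ vanishing identically on one eigendistribution. The de Rham splitting $(M,g)=(M_1,g_1)\times(M_2,g_2)$ then makes $(M_2,g_2)$ a space form of curvature $-2\lambda$, and the restriction of $B$ to $M_1$ is a trace-free Codazzi tensor satisfying the Gauss equation of a maximal hypersurface of $\mathbb{S}^{k_1+1}_1(r)$ or $\mathbb{H}^{k_1+1}_1\left(-\fr1{r^2}\right)$ according to the sign of $\lambda$; the explicit constructions of Examples \ref{expl3.2} and \ref{expl3.3} then reproduce the pair $(g,B)$, and Theorem \ref{thm2.4} gives conformal equivalence. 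None of this appears in your outline, and the ``first integral'' you anticipate does not exist because there is no transverse direction to integrate along. Your reductions of cases (1) and (2) to Theorems \ref{nie2} and \ref{nie1} are fine as far as they go, but even there you should note that $\Phi\equiv 0$ must be established globally before ``conformal isotropic'' can be concluded in the $t=1$ case.
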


\begin{rmk}\label{rem1}\rm It is directly verified in Section \ref{sec3} that each of the regular space-like
hypersurfaces stated in the above theorem has a
parallel Blaschke tensor.
\end{rmk}

\section{Necessary basics on regular space-like hypersurfaces}\label{sec2}

This section provides some basics of the conformal geometry of regular space-like hypersurfaces in the Lorentzian space forms. The main idea comes originally from the work of C.P. Wang on the M\"obius geometry of umbilic-free submanifolds in the unit sphere (\cite{wcp}), and much of the detail can be found in a series of papers by Nie at al (see for example \cite{ag1}, \cite{ag2}, \cite{ag3}).

Let ${x}:M^m\to \mathbb{S}^{m+1}_{1}\subset\bbr^{m+2}_1$ be a regular space-like hypersurface in $\mathbb{S}^{m+1}_{1}$.
Denote by  ${h}$ the (scalar-valued) second fundamental form of ${x}$
with components ${h}_{ij}$ and ${H}=\fr1m\tr {h}$
the mean curvature. Define the conformal factor $\rho>0$ and the conformal position $ Y$ of ${x}$, respectively, as follows:
\begin{equation}\label{eq2.1}
{\rho}^2=\fr m{m-1}\left(|{h}|^2-m|{H}|^2\right),\quad {Y}={\rho}(1, x)\in \bbr^1_1\times\bbr^{m+2}_1\equiv\bbr^{m+3}_2.
\end{equation}
Then ${Y}(M^m)$ is clearly included in the light cone $\mathbb{C}^{m+2}\subset\bbr^{m+3}_2$ where
$$
\mathbb{C}^{m+2}=\{\xi\in\bbr^{m+3}_2;\lagl \xi,\xi\ragl_2=0,\xi\neq0\}.
$$
The positivity of $\rho$ implies that ${Y}:M^m\to \bbr^{m+3}_2$ is an immersion of $M^m$ into the $\bbr^{m+3}_2$.
Clearly, the metric ${g}:=\lagl d{Y},d{Y}\ragl_2\equiv{\rho}^2\lagl dx,dx\ragl_1$ on $M^m$, induced by $Y$ and called the conformal metric, is invariant under the pseudo-orthogonal group $O(m+3,2)$ of linear transformations on $\bbr^{m+3}_2$ reserving the Lorentzian product $\lagl\cdot,\cdot\ragl_2$. Such kind of things are called the conformal invariants of $ x$.

{\dfn[cf. \cite{ag1},\cite{ag2},\cite{ag3}] Let $x,\td x:M^m\to \mathbb{S}^{m+1}_{1}$ be two regular space-like hypersurfaces with $Y,\td Y$ their conformal positions, respectively. If there exists some $\mathbb{T}\in O(m+3,2)$ such that $\td Y=\mathbb{T}(Y)$, then $x,\td x$ are called conformal equivalent to each other.}

For any local orthonormal frame field $\{e_i\}$ and the dual $\{\theta^i\}$ on $M^m$ with respect to the standard metric $\lagl d x,d x\ragl_1$, define
\begin{equation}
E_i={\rho}^{-1}e_i,\quad \omega^i={\rho}\theta^i.
\end{equation}
Then $\{E_i\}$ is a local orthonormal frame field with respect to the conformal metric ${g}$ with $\{\omega^i\}$ its dual coframe.
Let ${n}$ be the time-like unit normal of ${x}$. Define
$$ {\xi}=\left(-{H},-{H} x+{n}\right),$$
then $\lagl {\xi},{\xi} \ragl_2=-1$.
Let $\Delta$ denote the Laplacian with respect to  the conformal metric ${g}$. Define
${N}:M^{m}\to\bbr^{m+3}_2$ by
\begin{equation}
{N}=-\fr1m\Delta {Y}-\fr1{2m^2}\lagl\Delta {Y},\Delta {Y}\ragl_2{Y}.
\end{equation}
Then it holds that
\begin{equation}
\lagl \Delta {Y},{Y}\ragl_2=-m,\quad
\lagl {Y},{Y}\ragl_2=\lagl {N},{N}\ragl_2=0,\quad \lagl {Y},{N}\ragl_2=1.
\end{equation}
Furthermore, $\{{Y},{N},{Y}_{i},{\xi},\  1\leq i\leq m\}$ forms a moving frame in $\bbr^{m+3}_2$ along ${Y}$, with respect to which the equations of motion is as follows:
\begin{equation}\label{eq2.5}
\left\{\begin{aligned}
d{Y}=&\,\sum{Y}_i\omega^i,\\
d{N}=&\,\sum\psi_i{Y}_i+\phi  \xi,\\
d{Y}_i=&-\psi_i{Y}-\omega_i {N}+\sum\omega_{ij}{Y}_j+\tau_i  \xi,\\
d \xi=&\,\phi {Y}+\sum\tau_i{Y}_i.
\end{aligned}\right.
\end{equation}
By the exterior differentiation of \eqref{eq2.5} and using Cartan's lemma, we can write
\begin{equation}
\begin{aligned}
&\phi=\sum_i\Phi_i\omega^i,\quad \psi_i=\sum_jA_{ij}\omega^j,\quad A_{ij}=A_{ji};\\
&\tau_i=\sum_jB_{ij}\omega^j,\quad B_{ij}=B_{ji}.
\end{aligned}
\end{equation}
Then the conformal form $\Phi$, the Blaschke tensor $A$ and the conformal second fundamental form $B$ defined by
$$\Phi=\sum_i\Phi_i\omega^i,\quad A=\sum_{i,j}A_{ij}\omega^i\omega^j,\quad B=\sum_{i,j}B_{ij}\omega^i\omega^j$$
are all conformal invariants. By a long but direct computation, we find that
\begin{align}
A_{ij}=&-\lagl{Y}_{ij},{N}\ragl_2=-{\rho}^{-2}((\log {\rho})_{,ij}-e_i(\log {\rho})e_j(\log {\rho})+{h}_{ij}{H})\nnm\\
&-\fr12{\rho}^{-2}\left(|\bar{\nabla}\log {\rho}|^2 -|H|^2-1\right)\delta_{ij},\\
B_{ij}=&-\lagl{Y}_{ij},{\xi}\ragl_2={\rho}^{-1}({h}_{ij}-{H}\delta_{ij}),\label{eq2.8}\\
\Phi_i=&-\lagl {\xi},d{N}\ragl_2=-{\rho}^{-2}[({h}_{ij}-{H}\delta_{ij})e_j(\log {\rho})+e_i({H})],
\end{align}
where $Y_{ij}=E_j(Y_i)$, $\bar\nabla$ is the Levi-Civita connection of the standard metric $\lagl\cdot,\cdot\ragl_1$, and the subscript ${}_{,ij}$ denotes the covariant derivatives with respect to $\bar\nabla$. The differentiation of \eqref{eq2.5} also gives the following integrability conditions:
\begin{align}
&\Phi_{ij}-\Phi_{ji}
=\sum(B_{ik}A_{kj}-B_{kj}A_{ki}),\label{Phiij}\\
&A_{ijk}-A_{ikj}=B_{ij}\Phi_k-B_{ik}\Phi_j,\label{Aijk}\\
&B_{ijk}-B_{ikj}=\delta_{ij}\Phi_k-\delta_{ik}\Phi_j,\label{Bijk}\\
&R_{ijkl}=\sum(B_{ik}B_{jl}
-B_{il}B_{jk})+A_{il}\delta_{jk}-A_{ik}\delta_{jl}
+A_{jk}\delta_{il}-A_{jl}\delta_{ik},\label{Rijkl}
\end{align}
where $A_{ijk}$, $B_{ijk}$, $\Phi_{ij}$ are respectively the components of the covariant derivatives of $A$, $B$, $\Phi$, and $R_{ijkl}$ is the components of the Riemannian curvature tensor of the conformal metric $g$. Furthermore, by \eqref{eq2.1} and \eqref{eq2.8} we have
\be\label{trB}
\tr B=\sum B_{ii}=0,\quad |B|^2=\sum(B_{ij})^2=\fr{m-1}{m},
\ee
and by \eqref{Rijkl} we find the Ricci curvature tensor
\be\label{Rij}
R_{ij}=\sum B_{ik}B_{kj}+\delta_{ij}\tr
A+(m-2)A_{ij},
\ee
which implies that
\be\label{trA}
\tr A=\fr{1}{2m}(m^2\kappa-1)
\ee
with $\kappa$ being the normalized scalar curvature of $g$.

It is easily seen (\cite{ag1}) that the conformal position vector ${Y}$ defined above is exactly the canonical lift of the composition map
$\bar x=\sigma_1\circ{x}:M^m\to\mathbb{Q}^{m+1}_1$, implying that the conformal invariants ${g},\Phi,A,B$ defined above are the same as those of $\bar x$ introduced by Nie at al in \cite{ag1}.

On the other hand, the conformal space $\mathbb{Q}^{m+1}_1$ is clearly covered by the following two open sets:
\begin{equation}
\begin{aligned}
U_1=\left\{[y]\in\mathbb{Q}^{m+1}_1;y=(y_1,y_2,y_3)\in \bbr^1_1\times\bbr^1_1\times\bbr^{m+1}\equiv\bbr^{m+3}_2,y_1\neq0\right\},\\
U_2=\left\{[y]\in\mathbb{Q}^{m+1}_1;y=(y_1,y_2,y_3)\in \bbr^1_1\times\bbr^1_1\times\bbr^{m+1}\equiv\bbr^{m+3}_2,y_2\neq0\right\}.
\end{aligned}
\end{equation}
Define the following two diffeomorphisms
\begin{equation}
\strl{(\alpha)}{\Psi}:U_{\alpha}\to \mathbb{S}^{m+1}_1,\quad \alpha=1,2
\end{equation}
by
\begin{align}
\strl{{ (1)}}{\Psi}([y])=y^{-1}_1(y_2,y_3), \text{ for } [y]\in U_1,\ y=(y_1,y_2,y_3);\\
\strl{{ (2)}}{\Psi}([y])=y^{-1}_2(y_1,y_3), \text{ for } [y]\in U_2,\ y=(y_1,y_2,y_3). \end{align}
Then, with respect to the conformal structure on $\mathbb{Q}^{m+1}_1$ introduced in \cite{ag1} and the standard metric on $\mathbb{S}^{m+1}_1$, both $\strl{(1)}{\Psi}$ and $\strl{(2)}{\Psi}$ are conformal.

Now for a regular space-like hypersurface $\bar x:M^m \to\mathbb{Q}^{m+1}_1$ with the canonical lift $$Y:M^m\to\mathbb{C}^{m+2}\subset\bbr^{m+3}_2,$$
write $Y=(Y_1,Y_2,Y_3)\in \bbr^1_1\times\bbr^1_1\times\bbr^{m+1}$. Then we have the following two composed hypersurfaces:
\begin{equation}
\strl{{ (\alpha)}}{x}:=\strl{{ (\alpha)}}{\Psi}\! \circ \,\,\bar x|_{\strl{{ (\alpha)}}{M}}:\strl{{ (\alpha)}}{M}\to \mathbb{S}^{m+1}_1, \quad
\strl{{ (\alpha)}}{M}=\{p\in M; x(p)\in U_{\alpha}\},\quad\alpha=1,2.
\end{equation}
Then $M^m=\strl{(1)}{M}\bigcup \strl{(2)}{M}$, and the following lemma is clearly true by a direct computation:

\begin{lem} The conformal position vector $\strl{{ (1)}}{Y}$ of $\strl{{ (1)}}{x}$ is nothing but $Y|_{\strl{{ (1)}}{M}}$, while the conformal position vector $\strl{{ (2)}}{Y}$ of $\strl{{ (2)}}{x}$ is given by
\begin{equation}
\strl{{ (2)}}{Y}=\mathbb{T}(Y|_{\strl{{(2)}}{M}}),\mbox{ where } \mathbb{T}=\left( \begin{tabular}{c:c}$\begin{matrix} 0&1\\1&0\end{matrix}$&$0$\\
\hdashline $0$&$I_{m+1}$\end{tabular}\right).
\end{equation}
\end{lem}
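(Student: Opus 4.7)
The plan is to observe that both assertions follow from the general fact, cited in the excerpt, that the conformal position vector $\rho(1, x)$ of a space-like hypersurface $x$ in $\mathbb{S}^{m+1}_1$ is exactly the canonical lift of $\sigma_1 \circ x$, combined with the naturality of the canonical lift under the $O(m+3,2)$-action on $\bbr^{m+3}_2$.

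For the first assertion, the main observation is that $\strl{(1)}{\Psi}$ inverts $\sigma_1$ on the relevant domains. Identifying $\bbr^{m+3}_2 \equiv \bbr^1_1 \times \bbr^1_1 \times \bbr^{m+1}$ so that the first $\bbr^1_1$-slot is the coordinate appended by $\sigma_1$, one computes $\sigma_1 \circ \strl{(1)}{\Psi}([y]) = [(1, y_1^{-1} y_2, y_1^{-1} y_3)] = [y_1^{-1}(y_1, y_2, y_3)] = [y]$ for all $[y] \in U_1$. Consequently $\bar x|_{\strl{(1)}{M}} = \sigma_1 \circ \strl{(1)}{x}$. Applying the cited fact to $\strl{(1)}{x}$, its conformal position vector $\strl{(1)}{Y} = \strl{(1)}{\rho}(1, \strl{(1)}{x})$ is a canonical lift of $\bar x|_{\strl{(1)}{M}}$; since $Y|_{\strl{(1)}{M}}$ is such a canonical lift by hypothesis, the uniqueness of canonical lifts --- guaranteed by the normalizations $\lagl Y, Y \ragl_2 = 0$ and $\lagl \Delta Y, Y \ragl_2 = -m$ --- yields $\strl{(1)}{Y} = Y|_{\strl{(1)}{M}}$. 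Alternatively, writing $Y = Y_1 \cdot (1, \strl{(1)}{x})$ and expanding $\lagl dY, dY \ragl_2$ using $\lagl \strl{(1)}{x}, \strl{(1)}{x} \ragl_1 = 1$ shows $\lagl dY, dY \ragl_2 = Y_1^2 \lagl d\strl{(1)}{x}, d\strl{(1)}{x} \ragl_1$, which identifies $|Y_1|$ with $\strl{(1)}{\rho}$ by comparing with the defining relation $g = \rho^2 \lagl dx, dx \ragl_1$ for the conformal metric.

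For the second assertion, the matrix $\mathbb{T}$ merely swaps the two timelike coordinates of $\bbr^{m+3}_2$ and therefore lies in $O(m+3, 2)$. By the definition of conformal equivalence in the excerpt, the canonical lift of the conformally equivalent hypersurface $\mathbb{T} \circ \bar x$ is $\widetilde Y := \mathbb{T}(Y) = (Y_2, Y_1, Y_3)$. A direct computation gives $\strl{(1)}{\Psi}([\widetilde Y]) = \widetilde Y_1^{-1}(\widetilde Y_2, \widetilde Y_3) = Y_2^{-1}(Y_1, Y_3) = \strl{(2)}{\Psi}([Y])$, so $\strl{(1)}{\Psi} \circ (\mathbb{T} \circ \bar x) = \strl{(2)}{x}$ on $\strl{(2)}{M}$. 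Applying the first assertion to the hypersurface $\mathbb{T} \circ \bar x$ (with canonical lift $\widetilde Y$) then gives $\strl{(2)}{Y} = \widetilde Y|_{\strl{(2)}{M}} = \mathbb{T}(Y|_{\strl{(2)}{M}})$.

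The only nontrivial obstacle is bookkeeping: pinning down the implicit identification $\bbr^{m+3}_2 \equiv \bbr^1_1 \times \bbr^1_1 \times \bbr^{m+1}$ and placing the appended coordinate of $\sigma_1$ in the first $\bbr^1_1$-slot so that $\sigma_1$ and $\strl{(1)}{\Psi}$ are genuine mutual inverses. Once this convention is fixed, both assertions reduce to the cited fact together with the functoriality of the canonical lift under orthogonal transformations of $\bbr^{m+3}_2$.
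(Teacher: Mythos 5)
Your proposal is correct. There is nothing in the paper to compare it against: the authors state only that ``the following lemma is clearly true by a direct computation'' and give no argument, so the implied proof is a brute-force verification (compute $\strl{(\alpha)}{\rho}$ from the second fundamental form of $\strl{(\alpha)}{x}$ and check that $\strl{(\alpha)}{\rho}(1,\strl{(\alpha)}{x})$ reproduces $Y$ resp.\ $\mathbb{T}(Y)$). Your organization of that computation is cleaner and more structural: the identity $\sigma_1\circ\strl{(1)}{\Psi}=\mathrm{id}$ on $U_1$ reduces the first assertion to the paper's own statement that $\rho(1,x)$ is the canonical lift of $\sigma_1\circ x$ plus uniqueness of canonical lifts, and the observation that $\mathbb{T}\in O(m+3,2)$ swaps the two timelike slots reduces the second assertion to the first without ever recomputing a conformal factor. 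One small caveat, which you partially acknowledge in your alternative computation yielding $\strl{(1)}{\rho}=|Y_1|$: a lift into the light cone inducing the conformal metric is unique only up to sign, and $\strl{(1)}{\rho}>0$ by definition, so the stated identities $\strl{(1)}{Y}=Y|_{\strl{(1)}{M}}$ and $\strl{(2)}{Y}=\mathbb{T}(Y|_{\strl{(2)}{M}})$ hold verbatim only where $Y_1>0$ (resp.\ $Y_2>0$) and otherwise hold up to an overall sign. This imprecision is inherited from the paper's own formulation and does not affect any of the conformal invariants used later, so it is not a substantive gap in your argument.
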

\begin{cor}\label{cor2.2}
The basic conformal invariants $g,\Phi,A,B$ of $\bar x$ coincide accordingly with those of each of $\strl{{ (1)}}{x}$ and $\strl{{ (2)}}{x}$ on where $\strl{{ (1)}}{x}$ or $\strl{{ (2)}}{x}$ is defined, respectively.
\end{cor}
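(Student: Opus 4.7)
The plan is to reduce Corollary \ref{cor2.2} to the $O(m+3,2)$-invariance of the basic conformal invariants that is already implicit in the setup of Section \ref{sec2}. By the preceding lemma, on $\strl{(1)}{M}$ the conformal position vector $\strl{(1)}{Y}$ of $\strl{(1)}{x}$ is literally $Y$ restricted to $\strl{(1)}{M}$. Since $g$, $\Phi$, $A$, $B$ are built purely from $Y$, from its induced frame $\{Y,N,Y_i,\xi\}$, and from the product $\lagl\cdot,\cdot\ragl_2$, via the equations of motion \eqref{eq2.5} together with the defining formulas for $A_{ij}$, $B_{ij}$, $\Phi_i$, they are computed by the same expressions from the same map into $\bbr^{m+3}_2$. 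Hence the claim for $\alpha=1$ is trivial.

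For $\alpha=2$ the lemma gives $\strl{(2)}{Y}=\mathbb{T}(Y|_{\strl{(2)}{M}})$. The first step is a direct check that $\mathbb{T}\in O(m+3,2)$: the matrix $\mathbb{T}$ merely interchanges the two time-like coordinates of $\bbr^{m+3}_2$ and fixes the space-like $\bbr^{m+1}$-block, so $\lagl \mathbb{T}v,\mathbb{T}w\ragl_2=\lagl v,w\ragl_2$ for all $v,w$. The second step is to verify that the whole moving frame transforms covariantly: if $\{Y,N,Y_i,\xi\}$ is the frame attached to $Y|_{\strl{(2)}{M}}$, then $\{\mathbb{T}Y,\mathbb{T}N,\mathbb{T}Y_i,\mathbb{T}\xi\}$ inherits the same normalisations $\lagl Y,Y\ragl_2=\lagl N,N\ragl_2=0$, $\lagl Y,N\ragl_2=1$, $\lagl\xi,\xi\ragl_2=-1$, $\lagl\xi,Y\ragl_2=\lagl\xi,N\ragl_2=0$, and the same first-order relations with $dY$ dictated by \eqref{eq2.5}. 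Since these conditions determine $N$ and $\xi$ uniquely from $Y$ and from the hypersurface, the pair $(\mathbb{T}N,\mathbb{T}\xi)$ must be the canonical normal frame attached to $\strl{(2)}{Y}$.

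Once this is in place, the conclusion is immediate: every component of $g,\Phi,A,B$ is an inner product $\lagl\cdot,\cdot\ragl_2$ of two of these frame vectors (or of their covariant derivatives, produced again by \eqref{eq2.5} from the same data), and $\mathbb{T}$ preserves $\lagl\cdot,\cdot\ragl_2$. Hence all four invariants of $\strl{(2)}{x}$ coincide with those of $\bar x$ on $\strl{(2)}{M}$, finishing the proof. The only point that needs mild attention, and hence is the main obstacle such as it is, is the uniqueness of $N$ and $\xi$ as functions of $Y$ and of the hypersurface, so that the assertion ``the frame transforms by $\mathbb{T}$'' is a theorem rather than a choice; this reduces to the algebraic uniqueness of the solution to the linear constraints listed above together with the transversality of $\xi$ to the image of $dY$, and requires no further computation.
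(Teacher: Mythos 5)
Your proposal is correct and follows essentially the same route the paper intends: the corollary is an immediate consequence of the preceding lemma once one observes that $\mathbb{T}$ lies in $O(m+3,2)$ and that $g,\Phi,A,B$ are by construction invariant under that group (as the paper notes when introducing the conformal invariants). The paper leaves these details implicit; you have simply written them out.
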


Therefore, $\strl{{ (1)}}{\Psi}$ and $\strl{{ (2)}}{\Psi}$ can be viewed as two non-homogenous coordinate maps preserving the conformal invariants of the regular space-like hypersurfaces.

\begin{cor}
$\strl{{ (1)}}{x}$ and $\strl{{ (2)}}{x}$ are conformal equivalent to each other on $\strl{{ (1)}}{M}\cap \strl{{ (2)}}{M}$.
\end{cor}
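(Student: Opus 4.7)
The plan is to invoke Lemma 2.1 directly together with the definition of conformal equivalence stated earlier in Section 2. Recall that two regular space-like hypersurfaces are conformal equivalent precisely when their canonical lifts into $\bbr^{m+3}_2$ are intertwined by some element of the pseudo-orthogonal group $O(m+3,2)$ that preserves $\lagl\cdot,\cdot\ragl_2$. Therefore the whole task reduces to exhibiting such a group element relating $\strl{(1)}{Y}$ and $\strl{(2)}{Y}$ on the overlap $\strl{(1)}{M}\cap\strl{(2)}{M}$.

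The natural candidate is already supplied by Lemma 2.1: the block matrix
\[
\mathbb{T}=\left(\begin{tabular}{c:c}$\begin{matrix}0&1\\1&0\end{matrix}$&$0$\\\hdashline $0$&$I_{m+1}$\end{tabular}\right)
\]
which swaps the first two coordinates of $\bbr^1_1\times\bbr^1_1\times\bbr^{m+1}\equiv\bbr^{m+3}_2$. First I would verify that $\mathbb{T}\in O(m+3,2)$. Since
\[
\lagl(\xi_1,\xi_2,\xi_3),(\eta_1,\eta_2,\eta_3)\ragl_2
=-\xi_1\eta_1-\xi_2\eta_2+\xi_3\cdot\eta_3,
\]
and the interchange of the first two entries visibly leaves this symmetric bilinear form invariant while fixing the Euclidean part $\bbr^{m+1}$, we conclude $\mathbb{T}$ is indeed a linear isometry of $(\bbr^{m+3}_2,\lagl\cdot,\cdot\ragl_2)$.

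Next I would use Lemma 2.1 to read off the two conformal positions on the overlap: at every point $p\in\strl{(1)}{M}\cap\strl{(2)}{M}$ one has $\strl{(1)}{Y}(p)=Y(p)$ and $\strl{(2)}{Y}(p)=\mathbb{T}(Y(p))$. Composing gives $\strl{(2)}{Y}=\mathbb{T}\circ\strl{(1)}{Y}$ pointwise on the overlap. Since $\mathbb{T}\in O(m+3,2)$, the Definition in Section 2 yields at once that $\strl{(1)}{x}$ and $\strl{(2)}{x}$ are conformal equivalent on $\strl{(1)}{M}\cap\strl{(2)}{M}$, which is exactly the desired conclusion.

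There is really no obstacle here; the corollary is a one-line consequence of Lemma 2.1 once it is observed that the ``swap'' matrix is an element of the relevant pseudo-orthogonal group. The only point meriting a careful word is that this observation rests on the specific signature layout used throughout Section 2, namely that the two negative directions of $\lagl\cdot,\cdot\ragl_2$ correspond precisely to the first two coordinates interchanged by $\mathbb{T}$; once this is noted, the verification is immediate and the corollary follows.
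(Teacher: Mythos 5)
Your proposal is correct and follows exactly the route the paper intends: the corollary is an immediate consequence of Lemma 2.1 together with the definition of conformal equivalence, once one notes that the swap matrix $\mathbb{T}$ preserves $\lagl\cdot,\cdot\ragl_2$ because the two interchanged coordinates are precisely the two negative-signature directions of $\bbr^{m+3}_2$. The paper leaves this verification implicit, so your write-up simply makes explicit the one-line argument the authors had in mind.
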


On the other hand, all the regular space-like hypersurfaces in the three Lorentzian space forms can be viewed as ones in $\mathbb{Q}^{m+1}_1$ via the conformal embeddings $\sigma_1$, $\sigma_0$ and $\sigma_{-1}$ defined in \eqref{eq1.1}. Now, using $\strl{{ (1)}}{\Psi}$ and $\strl{{ (2)}}{\Psi}$, one can shift the conformal geometry of regular space-like hypersurfaces in $\mathbb{Q}^{m+1}_1$ to that of regular space-like hypersurfaces in the de Sitter space $\mathbb{S}^{m+1}_1$. It follows that, in a sense, the conformal geometry of regular space-like hypersurfaces can also be unified as that of the corresponding hypersurfaces in the de Sitter space.
Concisely, we can achieve this simply by introducing the following four conformal maps:
\begin{align}
&\strl{{ (1)}}{\sigma}=\strl{{ (1)}}{\Psi}\!\circ \,\,\sigma_0:\ \strl{{ (1)}}{\bbr}\!\!{}^{m+1}_1\rightarrow \mathbb{S}^{m+1}_1 ,\quad
u\mapsto\left(\fr{2u}{1+\lagl u,u\ragl},\fr{1-\lagl u,u\ragl}{1+\lagl u,u\ragl}\right),
\\
&\strl{{ (2)}}{\sigma}=\strl{{ (2)}}{\Psi}\!\circ \,\,\sigma_0:\ \strl{{ (2)}}{\bbr}\!\!{}^{m+1}_1\rightarrow \mathbb{S}^{m+1}_1 ,\quad
u\mapsto\left(\fr{1+\lagl u,u\ragl}{2u_1},\fr{u_2}{u_1},\fr{1-\lagl u,u\ragl}{2u_1}\right),
\\
&\strl{{ (1)}}{\tau}=\strl{{ (1)}}{\Psi}\!\circ \,\,\sigma_{-1}:\ \strl{{ (1)}}{\mathbb{H}}\!\!{}^{m+1}_1\rightarrow \mathbb{S}^{m+1}_1 ,\quad
y\mapsto\left(\fr{y_2}{y_1},\fr{y_3}{y_1},\fr{1}{y_1}\right),
\\
&\strl{{ (2)}}{\tau}=\strl{{ (2)}}{\Psi}\!\circ \,\,\sigma_{-1}:\ \strl{{ (2)}}{\mathbb{H}}\!\!{}^{m+1}_1\rightarrow \mathbb{S}^{m+1}_1 ,\quad
y\mapsto\left(\fr{y_1}{y_2},\fr{y_3}{y_2},\fr{1}{y_2}\right),
\end{align}
where
\begin{align}
&\strl{{ (1)}}{\bbr}\!\!{}^{m+1}_1=\{u\in \bbr^{m+1}_1;\quad 1+\lagl u,u \ragl\neq0\},\\
&\strl{{ (2)}}{\bbr}\!\!{}^{m+1}_1=\{u=(u_1,u_2)\in \bbr^{m+1}_1;u_1\neq0\},\\
&\strl{{ (1)}}{\mathbb{H}}\!\!{}^{m+1}_1=\{y=(y_1,y_2,y_3)\in \mathbb{H}^{m+1}_1;y_1\neq0\},\\
&\strl{{ (2)}}{\mathbb{H}}\!\!{}^{m+1}_1=\{y=(y_1,y_2,y_3)\in \mathbb{H}^{m+1}_1;y_2\neq0\}.
\end{align}

The following theorem will be used later in this paper:

\begin{thm}[\cite{ag3}]\label{thm2.4} Two hypersurfaces $x:M^m \rightarrow \mathbb{S}^{m+1}_p$ and $\td x:\td{M}^m \rightarrow \mathbb{S}^{m+1}_p$ $(m\geq3,p\geq0)$ are conformal equivalent if and only if there exists a diffeomorphism $f:M\rightarrow \td{M}$ which preserves the conformal metric and the conformal second fundamental form.
\end{thm}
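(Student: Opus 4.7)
The ``only if'' direction is immediate: if some $\mathbb{T}\in O(m+3,p+1)$ satisfies $\td Y\circ f=\mathbb{T}(Y)$, then since $\mathbb{T}$ preserves the pseudo-Euclidean inner product it preserves $g=\lagl dY,dY\ragl_{p+1}$, and differentiating the motion equations \eqref{eq2.5} shows it preserves the entire moving frame, hence also $B_{ij}=-\lagl Y_{ij},\xi\ragl_{p+1}$. The content of the theorem is the converse.

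Assume $f:M\to\td M$ preserves both $g$ and $B$. Choose local orthonormal frames $\{E_i\}$ on $(M,g)$ and $\{\td E_i\}$ on $(\td M,\td g)$ with $f_*E_i=\td E_i\circ f$, which is possible since $f$ is an isometry; in these frames $B_{ij}=\td B_{ij}\circ f$. The plan has two steps: (i) upgrade the hypothesis to show that $A$ and $\Phi$ are also matched by $f$, and then (ii) run a standard moving-frame rigidity argument driven by the structure equations \eqref{eq2.5}.

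For step (i), the Gauss-type relation \eqref{Rij} reads $R_{ij}=\sum_k B_{ik}B_{kj}+\delta_{ij}\tr A+(m-2)A_{ij}$, while \eqref{trA} gives $\tr A=\fr{1}{2m}(m^2\kappa-1)$ with $\kappa$ the normalized scalar curvature of $g$. Since $m\geq 3$, I can solve for $A_{ij}$ explicitly:
\be
A_{ij}=\fr{1}{m-2}\left(R_{ij}-\sum_k B_{ik}B_{kj}-\fr{m^2\kappa-1}{2m}\delta_{ij}\right),
\ee
so that $A$ is algebraically determined by $g$ and $B$ alone. Similarly, contracting the Codazzi-type identity \eqref{Bijk} over the pair $(i,j)$ and using $\sum_i B_{ii}=0$ from \eqref{trB}, one obtains $(m-1)\Phi_k=-\sum_i B_{iki}$, which expresses $\Phi$ in terms of the covariant derivative of $B$ with respect to $g$. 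Consequently $f^*\td A=A$ and $f^*\td\Phi=\Phi$. This is precisely where the hypothesis $m\geq 3$ is essential: at $m=2$ the factor $m-2$ vanishes and $A$ cannot be recovered from $(g,B)$ alone.

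For step (ii), assume $M$ is connected and fix $p_0\in M$, $\td p_0:=f(p_0)$. The tuples $\{Y,N,Y_1,\dots,Y_m,\xi\}(p_0)$ and $\{\td Y,\td N,\td Y_1,\dots,\td Y_m,\td\xi\}(\td p_0)$ are pseudo-orthonormal bases of $\bbr^{m+3}_{p+1}$ with identical Gram matrix (recorded by the relations just before \eqref{eq2.5}), so there is a unique $\mathbb{T}\in O(m+3,p+1)$ carrying the first basis to the second. The frame fields $\mathbb{T}\circ\{Y,N,Y_i,\xi\}$ and $\{\td Y,\td N,\td Y_i,\td\xi\}\circ f$ on $M$ then both satisfy the linear first-order system \eqref{eq2.5} with the same coefficient data $(\omega^i,\omega_{ij},A_{ij},B_{ij},\Phi_i)$, because the coframe and Levi-Civita connection of $g$ are preserved through the identified frames, and $A,B,\Phi$ have just been shown to agree. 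Coinciding at $p_0$, uniqueness for linear first-order systems along any path forces the two frames to coincide on all of $M$; reading off the first slot yields $\td Y\circ f=\mathbb{T}\circ Y$, which is the asserted conformal equivalence. The principal obstacle in this plan is the algebraic upgrade in step (i); once $(g,B)$ is known to determine $A$ and $\Phi$, the closing argument is the direct Lorentzian/higher-codimension analogue of C.~P.~Wang's M\"obius rigidity theorem in \cite{wcp}.
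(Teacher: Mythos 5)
The paper offers no proof of this statement: it is imported verbatim from \cite{ag3} as a known congruence theorem, so there is no internal argument to compare yours against. On its own merits your proof is correct and is the standard route (the Lorentzian analogue of Wang's M\"obius congruence theorem in \cite{wcp}, which is presumably how \cite{ag3} proceeds): the only non-routine content is your step (i), namely that for $m\geq 3$ the Blaschke tensor is recovered algebraically from $(g,B)$ via \eqref{Rij} and \eqref{trA}, and that tracing \eqref{Bijk} together with $\tr B=0$ from \eqref{trB} yields $(m-1)\Phi_k=-\sum_i B_{iki}$, after which the moving-frame/ODE-uniqueness argument on a connected domain is routine. The one small point worth flagging explicitly is the sign ambiguity of the unit normal $\xi$, and hence of $B$; the hypothesis ``preserves the conformal second fundamental form'' implicitly fixes compatible orientations, and with that convention your argument is complete.
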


\section{Examples}\label{sec3}

Before proving the main theorem, we first present some regular space-like hypersurfaces in $\mathbb{S}^{m+1}_1$ with parallel Blaschke tensors.

{\expl[\cite{ag1}, cf.\cite{hl04}]\label{expl3.1}\rm Let $\bbr^+$ be the half line of
positive real numbers. For any two given natural numbers $p,q$
with $p+q<m$ and a real number $a>1$, consider the hypersurface of warped product embedding
$$\dps u:\mathbb{H}^q\left(-\fr1{a^2-1}\right)\times\mathbb{S}^p(a)\times \bbr^+\times\bbr^{m-p-q-1}\to
\bbr^{m+1}_1$$
defined by
$$u=(tu',tu'',u'''),\mb{ where }
u'\in \mathbb{H}^q\left(-\fr1{a^2-1}\right),\ u''\in \mathbb{S}^p(a),\ t\in\bbr^+,\ u'''\in\bbr^{m-p-q-1}.$$
Then $\bar x:=\sigma_0\circ u$ is a regular space-like hypersurface in the conformal space $\mathbb{Q}^{m+1}_1$ with parallel conformal second fundamental form. This hypersurface is denoted as $WP(p,q,a)$ in \cite{ag1}. By Proposition 3.1 together with its proof in \cite{ag1}, $\bar x$ is also of parallel Blaschke tensor. It follows from Corollary \ref{cor2.2} that the composition map $$x=\Psi\circ\bar x:\mathbb{H}^q\left(-\fr1{a^2-1}\right)\times\mathbb{S}^p(a)\times \bbr^+\times\bbr^{m-p-q-1}\to \mathbb{S}^{m+1}_1,$$
where $\Psi$ denotes $\strl{(1)}{\Psi}$ or $\strl{(2)}{\Psi}$, defines a regular space-like hypersurface in $\mathbb{S}^{m+1}_1$ with both parallel conformal second fundamental form and parallel Blaschke tensor. For convenience, we also denote $x$ by the same symbol $WP(p,q,a)$. Note that, by a direct calculation, one easily finds that $WP(p,q,a)$ has exactly
three distinct conformal principal curvatures.

The similar example of $WP(p,q,a)$ in M\"obius geometry was originally found by \cite{hl04} and denoted by $CSS(p,q,a)$.}

{\expl\label{expl3.2}\rm Given $r>0$. For any integers $m$ and $K$ satisfying $m\geq
3$ and $2\leq K\leq m-1$, let $\td y_1:M^{K}_1\to
\mathbb{S}^{K+1}_1(r)\subset\bbr^{K+2}_1$ be a regular and maximal space-like hypersurface with constant scalar curvature}
\begin{equation}\label{ex1S}
\td S_1=\fr{mK(K-1)+(m-1)r^2}{mr^2}
\end{equation}
and
$$
\td y=(\td y_0,\td y_2):
\mathbb{H}^{m-K}\left(-\fr1{r^2}\right)\to\bbr^1_1\times\bbr^{m-K}\equiv\bbr^{m-K+1}_1
$$
be the canonical embedding, where $\td y_0>0$. Set
\begin{equation}\label{eq3.2}
\td M^m=M^{K}_1\times \mathbb{H}^{m-K}\left(-\fr1{r^2}\right),\quad\td Y=(\td
y_0,\td y_1,\td y_2).
\end{equation}
Then $\td Y:\td M^m\to\bbr^{m+3}_2$ is an immersion satisfying
$\lagl \td Y,\td Y\ragl_2=0$. The induced metric
$$g=\lagl d\td Y,d\td Y\ragl_2=-d\td y_0^2+\lagl d\td y_1,d\td y_1\ragl_1+d\td y_2\cdot d\td y_2$$
by $\td Y$ is clearly a Riemannian one, and thus as Riemannian manifolds we have
\begin{equation}
\label{exm1}
(\td M^m,g)=(M_1,\lagl d\td y_1,d\td y_1\ragl_1)\times \left(\mathbb{H}^{m-K}\left(-\fr1{r^2}\right),
\lagl d\td y,d\td y\ragl_1\right).
\end{equation}
Define
\begin{equation}
\td x_1=\fr{\td y_1}{\td y_0},\quad \td
x_2=\fr{\td y_2}{\td y_0}, \quad \td x=(\td x_1,\td x_2).
\end{equation}
Then $\td x^2=1$ and thus we have a smooth map $\td x:M^m\to \mathbb{S}^{m+1}_1$. Clearly,
\begin{equation}
\label{dx}
d\td x=-\fr{d\td y_0}{\td y^2_0}(\td y_1,\td y_2) +\fr1{\td
y_0}(d\td y_1,d\td y_2).
\end{equation}
Therefore the induced ``metric'' $\td g=d\td x\cdot d\td x$ is derived as
\begin{align}
\td g=\ &\td y^{-4}_0d\td y^2_0(\lagl \td y_1,\td y_1\ragl_1+\td y_2\cdot \td y_2)+\td y^{-2}_0(\lagl d\td y_1,d\td y_1\ragl_1+d\td y_2\cdot d\td y_2)\\
&-2\td y^{-3}_0d\td y_0(\lagl \td y_1,d\td y_1\ragl_1+\td y_2\cdot d\td y_2)\\
=\ &\td y^{-2}_0(d\td y^2_0+g+d\td y^2_0-2d\td y^2_0)\\
=\ &\td y^{-2}_0g,
\end{align}
implying that $\td x$ is a regular space-like hypersurface.

If $\td n_1$ is the time-like unit normal vector field of $\td y_1$ in
$\mathbb{S}^{K+1}_1(r)\subset\bbr^{K+2}_1$, then $\td n=(\td
n_1,0)\in\bbr^{m+2}_1$ is a time-like unit normal vector field of $\td x$.
Consequently, by \eqref{dx}, the second fundamental form $\td h$ of $\td
x$ is given by
\begin{equation}
\begin{aligned}
\td h=\lagl d\td n, d\td x\ragl_1&=\lagl(d\td n_1,0),-\td y^{-2}_0d\td y_0(\td y_1,\td y_2)+\td y^{-1}_0(d\td y_1,d\td y_2)\ragl_1\\
&=-\td y^{-2}_0d\td y_0\lagl d\td n_1,\td y_1\ragl_1+\td y^{-1}_0\lagl d\td n_1,d\td y_1\ragl_1\\
& =\td y^{-1}_0h,
\end{aligned}
\end{equation}
where $h$ is the second fundamental form of $\td y_1:M^{K}_1\rightarrow\mathbb{S}^{K+1}_1$.

Let $\{E_i\,;1\leq i\leq K\}$ (resp. $\{E_i\,;K+1\leq i\leq
m\}$) be a local orthonormal frame field on $(M_1,d\td y^2_1)$
(resp. on $\mathbb{H}^{m-K}(-\fr1{r^2})$). Then $\{E_i\,;1\leq i\leq m\}$
gives a local orthonormal frame field on $(\td M^m,g)$. Put $e_i=\td
y_0E_i$, $i=1,\cdots,m$. Then $\{e_i\,;1\leq i\leq m\}$ is a orthonormal frame field along $\td x$.
 Thus we obtain
\begin{equation}
\td h_{ij}=\td h(e_i,e_j)=\td y^2_0\td h(E_i,E_j)=\begin{cases}\td y_0
h(E_i,E_j)=\td y_0h_{ij},\quad &\mb{when\ } 1\leq i,j\leq K,\\0,\quad &\mb{otherwise\ }.
\end{cases}
\end{equation}
Since the mean curvature of $\td y_1\equiv 0$ by the maximality of $\td y_1$, the mean
curvature $\td H$ of $\td x$ vanishes. Therefore
$$\td \rho^2=\fr{m}{m-1}\left(\sum_{i,j}\td h^2_{ij}-m|\td
H|^2\right)=\fr{m}{m-1}\td y^2_0\sum_{i,j=1}^{K}h_{ij}^2 =\td
y^2_0,$$
where we have used the Gauss equation and \eqref{ex1S}. It follows that $\td x$ is regular and its conformal factor $\td
\rho=\td y_0$. Thus $\td Y$, given in \eqref{eq3.2}, is exactly the conformal position vector of $\td x$, implying the induced metric $g$ by $\td Y$ is nothing but
the conformal metric of $\td x$. Furthermore, the conformal second fundamental form of
$\td x$ is given by
\begin{equation}
\label{ex1B}
\td B=\td \rho^{-1}\sum(\td h_{ij}-\td H\delta_{ij})\omega^i\omega^j
=\sum_{i,j=1}^{K}h_{ij}\omega^i\omega^j,
\end{equation}
where $\{\omega^i\}$ is the local coframe field on $M^m$ dual to
$\{E_i\}$.

On the other hand, by \eqref{exm1} and the Gauss equations of $\td y_1$ and
$\td y$, one finds that the Ricci tensor of $g$ is given as follows:
\begin{align}
R_{ij}=&\fr{K-1}{r^2}\delta_{ij}+\sum_{k=1}^{K}h_{ik}h_{kj},\mb{ if }
1\leq i,j\leq K,\\
R_{ij}=&-\fr{m-K-1}{r^2}\delta_{ij},\mb{ if } K+1\leq
i,j\leq m,\\
R_{ij}=&0,\ \mb{if $1\leq i\leq K$, $K+1\leq j\leq m$, or
$K+1\leq i\leq m$, $1\leq j\leq K$}
\end{align}
which implies that the normalized scalar curvature of $g$ is given by
\begin{equation}
\kappa=\fr{m(K(K-1)-(m-K)(m-K-1))+(m-1)r^2}{m^2(m-1)r^2}.
\end{equation}
Thus
\begin{equation}
\label{ex1k}
\fr1{2m}(m^2\kappa-1)=\fr{K(K-1)-(m-K)(m-K-1)}{2(m-1)r^2}.
\end{equation}

Since $m\geq 3$, it follows from \eqref{Rij} and \eqref{ex1B}--\eqref{ex1k}
that the Blaschke tensor of $\td x$ is given by $A=\sum
A_{ij}\omega^i\omega^j$, where
\begin{align}
A_{ij}=&\fr{1}{2r^2}\delta_{ij},\mb{ if }
1\leq i,j\leq K,\quad A_{ij}=-\fr{1}{2r^2}\delta_{ij},\mb{ if } K+1\leq
i,j\leq m,\\[3pt]
A_{ij}=&0,\ \mb{if $1\leq i\leq K$, $K+1\leq j\leq m$, or
$K+1\leq i\leq m$, $1\leq j\leq K$}.
\end{align}

Clearly, $A$ has two distinct eigenvalues
$$
\lambda_1=-\lambda_2=\fr{1}{2r^2},
$$
which are constant. Thus by \eqref{exm1}, $A$ is parallel.

{\expl\label{expl3.3}\rm Given $r>0$. For any integers $m$ and $K$ satisfying $m\geq
3$ and $2\leq K\leq m-1$, let
$$\td y:M^{K}_1\to
\mathbb{H}^{K+1}_1\left(-\fr1{r^2}\right)\subset\bbr^{K+2}_2$$
be a regular and maximal space-like hypersurface with constant scalar curvature}
\begin{equation}
\label{ex2S}
\td S_1=\fr{-mK(K-1)+(m-1)r^2}{mr^2}
\end{equation}
and
$$
\td y_2: \mathbb{S}^{m-K}(r)\to\bbr^{m-K+1}
$$
be the canonical embedding. Set
\begin{equation}
\td M^m=M^{K}_1\times \mathbb{S}^{m-K}(r),\quad\td Y=(\td y,\td
y_2).
\end{equation}
Then $\lagl \td Y,\td Y\ragl_2=0$. Thus we have an immersion $\td Y:M^m\to \mathbb{C}^{m+2}\subset\bbr^{m+3}_2$ with the induced metric
$$g=\lagl d\td Y,d\td Y\ragl_2=\lagl d\td y,d\td y\ragl_2+d\td y_2\cdot d\td y_2,$$
which is certainly positive definite. It follows that, as Riemannian manifolds
\begin{equation}
\label{exm2}
(\td M^m,g)=(M_1,\lagl d\td y,d\td y\ragl_2)\times
\left(\mathbb{S}^{m-K}(r),d\td y^2_2\right).
\end{equation}

If we write $\td y=(\td y_0,\td y'_1,\td y''_1)\in\bbr^1_1\times\bbr^1_1\times\bbr^K\equiv\bbr^{K+2}_2$, then $\td y_0$ and $\td y'_1$ can not be zero simultaneously. So, without loss of generality, we can assume that $\td y_0\neq0$. In this case, we denote $\veps=\sgn(\td y_0)$ and write $\td y_1:=(\td y'_1,\td y''_1)$. Define
\begin{equation}
\td x_1=\fr{\td y_1}{\td y_0},\quad \td
x_2=\fr{\td y_2}{\td y_0}, \quad \td x=\varepsilon(\td x_1,\td x_2).
\end{equation}
Then $\td x\in \bbr^{m+2}_1,\td x^2=1$ and, similar to that in Example \ref{expl3.2}, $\td x:\td M^m\to \mathbb{S}^{m+1}_1$ defines a regular space-like hypersurface. In fact, since
\begin{equation}
\varepsilon d\td x=-\fr{d\td y_0}{\td y^2_0}(\td y_1,\td y_2) +\fr1{\td
y_0}(d\td y_1,d\td y_2),
\end{equation}
the induced metric $\td g=d\td x\cdot d\td x$ is related to $g$ by
\begin{align}
\td g=&\td y^{-4}_0d\td y^2_0(\lagl \td y_1,\td y_1\ragl_1+\td y_2\cdot\td y_2)+\td
y^{-2}_0(\lagl d\td y_1,d\td y_1\ragl_1+d\td y_2\cdot d\td y_2)\nnm\\
&-2\td y^{-3}_0d\td y_0(\lagl \td y_1,d\td y_1\ragl_1+ \td y_2\cdot d\td y_2)\nnm\\
=&\td y^{-2}_0(-d\td y^2_0+\lagl d\td y_1,d\td y_1\ragl_1+\td y_2\cdot d\td y_2)\nnm\\
=&\td y^{-2}_0g.
\end{align}

Suitably choose the time-like unit normal vector field  $(\td n_0,\td n_1)$ of $\td y$, define
$$\td n=(\td n_1,0)-\varepsilon\td
n_0\td x\in\bbr^{m+2}_1.$$
Then $\lagl \td n,\td n\ragl_1=-1,\lagl \td n,\td x\ragl_1=0,\lagl \td n,d\td x\ragl_1=0$ indicating that $\td n$ is a time-like unit normal vector field of $\td x$.
The second fundamental form $\td h$ of $\td x$ is given by
\begin{align}
\td h=&\lagl d\td n, d\td x\ragl_1=\lagl (d\td n_1,0)-\varepsilon d\td n_0\td x-\varepsilon\td n_0d\td x,d\td x\ragl_1\nnm\\
=&\lagl(d\td n_1,0),d\td x\ragl_1-\varepsilon d\td n_0\lagl \td x,d\td x\ragl_1-\varepsilon\td n_0\lagl d\td x,d\td x\ragl_1\nnm\\
=&\varepsilon(\td y^{-1}_0(-d\td n_0\cdot d\td y_0+\lagl d\td n_1,d\td y_1\ragl_1)-\td n_0\lagl d\td x,d\td x\ragl_1)\nnm\\
=&\varepsilon(\td y^{-1}_0\lagl d(\td n_0,\td n_1),d\td y\ragl_1-\td n_0\lagl d\td x,d\td x\ragl_1)\nnm\\
=&\varepsilon(\td y^{-1}_0h-\td n_0\td y^{-2}_0g)
\end{align}
where $h$ is the second fundamental form of $\td y$.

Let $\{E_i\,;1\leq i\leq K\}$ (resp. $\{E_i\,;K+1\leq i\leq
m\}$) be a local orthonormal frame field on $(M_1,d\td y^2)$ (resp.
on $\mathbb{S}^{m-K}(r)$). Then $\{E_i\,;1\leq i\leq m\}$ is a local
orthonormal frame field on $(M^m,g)$. Put $e_i=\varepsilon\td y_0E_i$,
$i=1,\cdots,m$. Then $\{e_i\,;1\leq i\leq m\}$ is a local
orthonormal frame field with respect to the metric $\td g=\lagl d\td x,d\td x\ragl_1$. Thus
\begin{equation}
\td h_{ij}=\td h(e_i,e_j)=\td y^2_0\td h(E_i,E_j)=\begin{cases}\varepsilon(\td y_0h_{ij}-\td n_0\delta_{ij}),\quad &\mb{when\ } 1\leq i,j\leq K,\\
-\varepsilon\td n_0g(E_i,E_j)=-\varepsilon\td n_0\delta_{ij},\quad &\mb{when\ }K+1\leq i,j\leq m,\\
0,\quad &\mb{for other}\quad i,j.
\end{cases}
\end{equation}
By the maximality of $\td y_1$, the mean curvature of $\td x$ is
\begin{equation}
\td H=\fr1m\sum\td h_{ii}=\varepsilon\fr{1}{m}(\td y_0KH_1-K\td n_0)-\varepsilon\fr{1}{m}(m-K)\td n_0=-\varepsilon\td n_0
\end{equation}
and
\begin{equation}
|\td h|^2=\sum^{K}_{i,j=1}\td y^2_0h^2_{ij}+\td n^2_0\delta^2_{ij}-2\td n_0\td y_0h_{ij}\delta_{ij}+\sum^m_{i,j=K+1}(-\td n_0)^2\delta^2_{ij}=\td y_0h^2+m\td n^2_0.
\end{equation}
Therefore, by definition, the conformal factor $\td \rho$ of $\td x$ is
determined by
$$\td \rho^2=\fr{m}{m-1}\left(\sum_{i,j}\td h^2_{ij}-m|\td
H|^2\right)=\fr{m}{m-1}\td y^2_0\sum_{i,j}h_{ij}^2 =\td y^2_0,$$
where we have used the Gauss equation and \eqref{ex2S}. Hence $\td \rho=|\td y_0|=\varepsilon\td y_0>0$ and thus $\td Y=\td \rho(1,\td x)$ is the conformal position vector of $\td x$. Consequently, the conformal metric of $\td x$ is defined by $\lagl d\td Y,d\td Y\ragl_2=g$. Furthermore, the conformal second fundamental form of $\td x$ is given by
\begin{equation}
\label{ex2B}
\td B=\td \rho^{-1}(\td h_{ij}-\td H\delta_{ij})\omega^i\omega^j
=\sum_{i,j=1}^{K}h_{ij}\omega^i\omega^j,
\end{equation}
where $\{\omega^i\}$ is the local coframe field on $M^m$ dual to $\{E_i\}$.

On the other hand, by \eqref{exm2} and the Gauss equations of $\td y_1$
and $\td y$, one finds the Ricci tensor of $g$ as
follows:
\begin{align}
R_{ij}=&-\fr{K-1}{r^2}\delta_{ij}+\sum_{k=1}^{K}h_{ik}h_{kj},\quad\mb{if\
} 1\leq i,j\leq K,\\
R_{ij}=&\fr{m-K-1}{r^2}\delta_{ij},\quad\mb{if\ } K+1\leq i,j\leq m,\\
R_{ij}=&0,\quad\mb{if $1\leq i\leq K$, $K+1\leq j\leq m$, or
$K+1\leq i\leq m$, $1\leq j\leq K$},
\end{align}
which implies that the normalized scalar curvature of $g$ is given by
\begin{equation}
\kappa=\fr{m((m-K)(m-K-1)-K(K-1))+(m-1)r^2}{m^2(m-1)r^2}.
\end{equation}
Thus
\begin{equation}
\label{ex2k}
\fr1{2m}(m^2\kappa-1)=\fr{(m-K)(m-K-1)-K(K-1)}{2(m-1)r^2}.
\end{equation}

Since $m\geq 3$, it follows from \eqref{Rij} and \eqref{ex2B}--\eqref{ex2k}
that the Blaschke tensor of $\td x$ is given by $A=\sum A_{ij}\omega^i\omega^j$, where
\begin{align}
A_{ij}=&-\fr{1}{2r^2}\delta_{ij},\quad\mb{if\ } 1\leq i,j\leq K,
\quad
A_{ij}=\fr{1}{2r^2}\delta_{ij},\quad\mb{if\ } K+1\leq i,j\leq m,
\\
A_{ij}=&0,\quad\mb{if $1\leq i\leq K$, $K+1\leq j\leq m$, or
$K+1\leq i\leq m$, $1\leq j\leq K$},
\end{align}
which, once again, implies that $A$ is parallel with two distinct
eigenvalues
$$
\lambda_1=-\lambda_2=-\fr{1}{2r^2}.
$$

\section{Proof of the main theorem}

To make the argument more readable, we divide the proof into several
lemmas.

Let $x:M^m\rightarrow \mathbb{S}^{m+1}_1$ be a regular space-like hypersurface.
\begin{lem}\label{lem4.1} If the Blaschke tensor $A$ is parallel, then the
conformal form $\Phi$ vanishes identically.
\end{lem}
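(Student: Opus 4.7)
The plan is to squeeze $\Phi\equiv 0$ out of the integrability identity \eqref{Aijk} relating $A$, $B$ and $\Phi$, using only the algebraic facts that $\tr B=0$ and $|B|^2=(m-1)/m$ from \eqref{trB}.

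First I would invoke the hypothesis that $A$ is parallel, which means $A_{ijk}=0$ for all $i,j,k$. Substituting this into \eqref{Aijk} gives the purely algebraic relation
\begin{equation*}
B_{ij}\Phi_k = B_{ik}\Phi_j
\end{equation*}
at every point of $M^m$ and for all indices $i,j,k$. This is the main algebraic identity the rest of the argument will exploit.

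Next I would contract this identity in two ways. Setting $j=i$ and summing over $i$ produces $(\tr B)\Phi_k=\sum_i B_{ik}\Phi_i$, and since $\tr B=0$ by \eqref{trB}, one gets
\begin{equation*}
\sum_i B_{ik}\Phi_i = 0\qquad\text{for every }k.
\end{equation*}
Then, returning to $B_{ij}\Phi_k=B_{ik}\Phi_j$, I would multiply by $\Phi_k$ and sum over $k$:
\begin{equation*}
B_{ij}|\Phi|^2 = \Bigl(\sum_k B_{ik}\Phi_k\Bigr)\Phi_j = 0.
\end{equation*}
Thus $B_{ij}|\Phi|^2=0$ pointwise. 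Since $|B|^2=(m-1)/m>0$, the tensor $B$ cannot vanish at any point, so $|\Phi|^2\equiv 0$ and hence $\Phi\equiv 0$.

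There is no real obstacle here; the proof is essentially a two-step contraction and uses only the Codazzi-type identity \eqref{Aijk} together with the normalization \eqref{trB}. The only point that requires a bit of care is confirming that $|B|^2=(m-1)/m$ holds without exception (which is built into the definition of a \emph{regular} space-like hypersurface through the positivity of $\rho$), so that the conclusion $B_{ij}|\Phi|^2=0$ really forces $|\Phi|^2=0$ everywhere on $M^m$.
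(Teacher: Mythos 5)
Your proof is correct. Both you and the paper start from the same place: since $A$ is parallel, \eqref{Aijk} collapses to the pointwise algebraic identity $B_{ij}\Phi_k=B_{ik}\Phi_j$, and the conclusion is then forced by the normalization \eqref{trB}. The difference is in how that identity is exploited. The paper diagonalizes $B$ at a point, notes that $\tr B=0$ together with $|B|^2=(m-1)/m>0$ guarantees \emph{two} distinct indices with nonzero eigenvalues $B_{i_1},B_{i_2}$, and then kills each component $\Phi_i$ by a small case analysis on indices. You instead contract twice: first $j=i$ summed over $i$ gives $\sum_i B_{ik}\Phi_i=(\tr B)\Phi_k=0$, and then multiplying by $\Phi_k$ and summing over $k$ gives $B_{ij}|\Phi|^2=\bigl(\sum_k B_{ik}\Phi_k\bigr)\Phi_j=0$, so $|\Phi|^2=0$ wherever $B\neq 0$, i.e.\ everywhere. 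Your version is frame-free, avoids the pointwise diagonalization and the two-nonzero-eigenvalue observation, and only needs $B\not\equiv 0$ at each point rather than the finer spectral information; the paper's version, on the other hand, sets up the diagonalizing frame and the ``two nonzero entries'' device that it reuses in the later lemmas. Your remark about regularity guaranteeing $|B|^2=(m-1)/m$ without exception is exactly the right point to flag; it is what rules out a zero of $B$.
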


\begin{proof} For any given point $p\in M^m$, take an orthonormal frame field $\{E_i\}$ around $p$ with respect to the conformal metric $g$, such that
$B_{ij}(p)=B_i\delta_{ij}$.
Then it follows from \eqref{Aijk} that
$$
A_{ijk}-A_{ikj}=B_{ij}\Phi_k-B_{ik}\Phi_j.
$$
Since $A$ is parallel, $A_{ijk}=0$ for any $i,j,k$. Thus at the
given point $p$, we have
\begin{equation}
B_i(\delta_{ij}\Phi_k(p)-\delta_{ik}\Phi_j(p))=0.
\end{equation}
By \eqref{trB}, there are different indices $i_1,i_2$ such that
$B_{i_1}\neq 0$ and $B_{i_2}\neq 0$. Then for any indices $i,j$, we
have
\begin{equation}
\label{phi0}
\delta_{i_1j}\Phi_i(p)-\delta_{i_1i}\Phi_j(p)=0,\quad
\delta_{i_2j}\Phi_i(p)-\delta_{i_2i}\Phi_j(p)=0.
\end{equation}
If $i=i_1$, put $j=i_2$; if $i\neq i_1$, put $j=i_1$. Then it
follows from \eqref{phi0} that $\Phi_i(p)=0$. By the arbitrariness of $i$
and $p$, we obtain that $\Phi\equiv 0$.\end{proof}
\begin{rmk}\rm
Since $A$ is parallel, then all eigenvalues of the
Blaschke tensor $A$ of $x$ are constant on $M^m$.
From the equation
$$
0=\sum A_{ijk}\omega^k=dA_{ij}-A_{kj}\omega^k_i-A_{ik}\omega^k_j,
$$
we obtain that
\begin{equation}
\label{wij0}
 \omega^i_j=0\quad \mb{in case that\ }A_i\neq A_j.
\end{equation}
\end{rmk}

\begin{lem}\label{lem4.2} If $A$ is parallel, then $B_{ij}=0$ as long as
$A_i\neq A_j$.\end{lem}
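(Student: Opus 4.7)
The plan is to exploit the integrability condition \eqref{Phiij} together with Lemma \ref{lem4.1}, which already provides the crucial input $\Phi \equiv 0$. The underlying geometric content is that, once $\Phi$ vanishes, \eqref{Phiij} forces the symmetric tensors $A$ and $B$ to commute as endomorphisms of the tangent space; since $A$ has constant eigenvalues (by the Remark after Lemma \ref{lem4.1}), it then follows that $B$ must respect the eigenspace decomposition of $A$, which is exactly the content of the lemma.

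Concretely, I would proceed as follows. Because $A$ is parallel and symmetric with respect to the conformal metric $g$, about any given point one can choose a local orthonormal frame field $\{E_i\}$ that diagonalizes $A$, i.e. $A_{ij}=A_i\delta_{ij}$, where the $A_i$ are (locally) constant. Next, since $A$ is parallel, Lemma \ref{lem4.1} yields $\Phi_i\equiv 0$ for every $i$, and hence $\Phi_{ij}=0$ as well. Plugging this into the integrability condition \eqref{Phiij} gives
\begin{equation*}
0=\sum_{k}(B_{ik}A_{kj}-B_{kj}A_{ki}).
\end{equation*}

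In the diagonalizing frame, the right-hand side collapses to $B_{ij}A_j-B_{ji}A_i$. Using the symmetry $B_{ij}=B_{ji}$, this simplifies to
\begin{equation*}
B_{ij}(A_j-A_i)=0,
\end{equation*}
and therefore $B_{ij}=0$ whenever $A_i\neq A_j$, as required.

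I do not foresee any genuine obstacle in this argument: the only mild point to keep track of is that the frame diagonalizing $A$ is well defined locally because the eigenvalues of $A$ are constant (so the eigenspace distributions are smooth), but this is exactly what the Remark after Lemma \ref{lem4.1} records, and we are not using \eqref{wij0} here. The remaining work in the paper will consist of exploiting $\Phi\equiv 0$ and the eigenspace splitting furnished by Lemma \ref{lem4.2} to decompose $M^m$ into a product and identify the factors, but that is downstream of the present statement.
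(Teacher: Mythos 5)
Your argument is correct and is essentially identical to the paper's own proof: both diagonalize $A$ in a local orthonormal frame (possible since $A$ is parallel), invoke Lemma \ref{lem4.1} to kill $\Phi$, and then read off $B_{ij}(A_j-A_i)=0$ from the integrability condition \eqref{Phiij}. No issues.
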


\begin{proof} Since $A$ is parallel, there exists around each
point a local orthonormal frame field $\{E_i\}$ such that
\begin{equation}
A_{ij}=A_i\delta_{ij}.
\end{equation}
It follows from \eqref{Phiij} and Lemma 4.1 that $\sum
B_{ik}A_{kj}-A_{ik}B_{kj}=\Phi_{ij}-\Phi_{ji}=0$. Then we have $B_{ij}(A_j-A_i)=0$.\end{proof}

Now, let $t$ be the number of the distinct eigenvalues of $A$, and
$\lambda_1,\cdots,\lambda_t$ denote the distinct eigenvalues of $A$.
Fix a suitably chosen orthonormal frame field $\{E_i\}$ for which
the matrix $(A_{ij})$ can be written as
\begin{equation}
\label{DiaA}
(A_{ij})=\mb{Diag}(\undbc{\lambda_1,\cdots,\lambda_1}_{k_1},
\undbc{\lambda_2,\cdots,\lambda_2}_{k_2},\cdots,\undbc{\lambda_t,\cdots,
\lambda_t}_{k_t}),
\end{equation}
or equivalently,
\begin{equation}
A_1=\cdots=A_{k_1}=\lambda_1,\cdots,
A_{m-k_t+1}=\cdots=A_m=\lambda_t.
\end{equation}
\begin{lem}\label{lem4.3} Suppose that $t\geq 3$. If, with respect to an orthonormal
frame field $\{E_i\}$, \eqref{DiaA} holds and at a point $p$,
$B_{ij}=B_i\delta_{ij}$, then
$$
B_i=B_j\quad\mb{in the case that\ }A_i=A_j.
$$\end{lem}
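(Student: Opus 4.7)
The plan is to feed the parallelism of $A$ into the Ricci identity to constrain the Riemann tensor, then substitute into the Gauss equation \eqref{Rijkl} to read off multiplicative relations among the diagonal entries $B_i$, which will be rigid enough to force $B_i=B_{i'}$ within a single $A$-eigenblock once $t\geq 3$ is used.

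First I would apply the Ricci identity to the parallel $(0,2)$-tensor $A$. Since $A_{ij,kl}-A_{ij,lk}=0$, we obtain
\[
R_{mikl}A_{mj}+R_{mjkl}A_{im}=0.
\]
In the frame \eqref{DiaA} we have $A_{ij}=A_i\delta_{ij}$, so this collapses to $(A_i-A_j)R_{ijkl}=0$ and hence
\[
R_{ijkl}=0\quad\text{whenever } A_i\neq A_j.
\]
Next I would evaluate $R_{ijij}$ at $p$ by plugging $B_{ij}(p)=B_i\delta_{ij}$ and $A_{ij}=A_i\delta_{ij}$ into \eqref{Rijkl}. For $i\neq j$ a direct substitution yields
\[
R_{ijij}(p)=B_iB_j-A_i-A_j.
\]
Combining with the Ricci-identity consequence, whenever $i$ lies in the block with eigenvalue $\lambda_\alpha$ and $j$ lies in a different block with eigenvalue $\lambda_\beta$, we conclude
\[
B_iB_j=\lambda_\alpha+\lambda_\beta.
\]

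Finally, I would use this relation to pair off indices within a fixed block $\alpha$. Take $i,i'$ with $A_i=A_{i'}=\lambda_\alpha$ and an index $j$ outside block $\alpha$, lying in some block $\beta$. The identity gives $B_iB_j=B_{i'}B_j=\lambda_\alpha+\lambda_\beta$, so $B_i=B_{i'}$ as soon as one can pick $j$ outside block $\alpha$ with $B_j\neq 0$. The only thing preventing this is the degenerate possibility that $B_j=0$ for every $j$ outside block $\alpha$; but then $\lambda_\alpha+\lambda_\beta=0$ for every $\beta\neq\alpha$, forcing all other eigenvalues to coincide at $-\lambda_\alpha$. Since $t\geq 3$ there exist at least two distinct $\lambda_\beta,\lambda_\gamma$ with $\beta,\gamma\neq\alpha$, a contradiction. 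Hence a suitable $j$ with $B_j\neq 0$ always exists and the lemma follows.

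The only subtle point is the last step, where the hypothesis $t\geq 3$ is essential: with only two blocks the system $B_iB_j=\lambda_\alpha+\lambda_\beta$ has a single equation per pair $(i,j)$ across blocks and cannot rule out some $B_j$'s vanishing, so this is precisely where the case split in the main theorem will eventually arise. Everything else is routine once the Ricci identity is exploited to kill the off-block curvature components.
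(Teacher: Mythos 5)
Your proposal is correct and follows essentially the same route as the paper: the paper extracts $R_{ijji}=0$ for $A_i\neq A_j$ by differentiating the identity $\omega^i_j=0$ (which is its form of your Ricci-identity step), substitutes into the Gauss equation \eqref{Rijkl} to get $B_iB_j=A_i+A_j$ across blocks, and then runs exactly your pairing argument, with $t\geq3$ ruling out the degenerate case where all off-block $B_j$ vanish. No gaps.
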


\begin{proof}  By \eqref{wij0}, for any $i,j$ satisfying $A_i\neq A_j$,
we have $\omega^i_j=0$. Differentiating this equation, we obtain
from \eqref{Rijkl} that
$$
0=R_{ijji}=B_{ij}^2-B_{ii}B_{jj} +A_{ii}-A_{ij}\delta_{ij}
+A_{jj}-A_{ij}\delta_{ij}.
$$
Thus at $p$, it holds that
\begin{equation}
\label{BiBj}
-B_iB_j +A_i+A_j=0.
\end{equation}

If there exist indices $i,j$ such that $A_i=A_j$ but $B_i\neq B_j$,
then for all $k$ satisfying $A_k\neq A_i$, we have
\begin{equation}
\label{1}
-B_iB_k +A_i+A_k=0,\quad -B_jB_k +A_j+A_k=0.
\end{equation}
It follows from \eqref{1} that $(B_i-B_j)B_k=0$, which implies that
$B_k=0$. Thus by \eqref{1}, we obtain $A_k=-A_i=-A_j$. This implies
that $t=2$, contradicting the assumption.\end{proof}

\begin{cor}\label{cor4.4} If $t\geq 3$, then there exists an orthonormal
frame field $\{E_i\}$ such that
\begin{equation}
\label{DADB}
A_{ij}=A_i\delta_{ij},\quad B_{ij}=B_i\delta_{ij}.
\end{equation}
Furthermore, if \eqref{DiaA} holds, then
\begin{equation}
\label{DiaB}
(B_{ij})=\rm{Diag}(\undbc{\mu_1,\cdots,\mu_1}_{k_1},
\undbc{\mu_2,\cdots,\mu_2}_{k_2},\cdots,\undbc{\mu_t,\cdots,
\mu_t}_{k_t}),
\end{equation}
that is
\begin{equation}
B_1=\cdots=B_{k_1}=\mu_1,\cdots,
B_{m-k_t+1}=\cdots=B_m=\mu_t.
\end{equation}
\end{cor}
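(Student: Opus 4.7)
The plan is to combine the block-diagonal structure forced by Lemma \ref{lem4.2} with the within-block uniformity forced by Lemma \ref{lem4.3}. First I would fix a local orthonormal frame $\{E_i\}$ with respect to $g$ that diagonalizes the (parallel) Blaschke tensor in the form \eqref{DiaA}; this is possible because $A$ is parallel and symmetric, and one can order the eigenvectors so that those belonging to $\lambda_\alpha$ are grouped together. Relative to such a frame, Lemma \ref{lem4.2} gives $B_{ij}=0$ whenever $A_i\neq A_j$, so the matrix $(B_{ij})$ is block-diagonal with one symmetric block $B^{(\alpha)}$ of size $k_\alpha$ for each eigenvalue $\lambda_\alpha$ of $A$.

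Next I would diagonalize $B$ inside each block. For each $\alpha$, the block $B^{(\alpha)}$ is a symmetric $k_\alpha\times k_\alpha$ matrix, so there is an orthogonal transformation of $\spn\{E_i:A_i=\lambda_\alpha\}$ that diagonalizes it. Performing these orthogonal changes of frame separately in each eigenspace of $A$ produces a new local orthonormal frame (still denoted $\{E_i\}$) in which $A$ retains its diagonal form \eqref{DiaA} (rotations within an eigenspace preserve the eigenvalue structure) and simultaneously $B_{ij}=B_i\delta_{ij}$. This establishes \eqref{DADB}.

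Finally I would invoke Lemma \ref{lem4.3}, which is where the assumption $t\geq 3$ is essential. In the frame just constructed we have both \eqref{DiaA} and $B_{ij}=B_i\delta_{ij}$, so Lemma \ref{lem4.3} applies at every point: $A_i=A_j$ forces $B_i=B_j$. Consequently, within the $\alpha$-th block all of $B_{k_1+\cdots+k_{\alpha-1}+1},\ldots,B_{k_1+\cdots+k_\alpha}$ share a common value, which we call $\mu_\alpha$. This gives exactly the block structure \eqref{DiaB}.

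The only mild subtlety is that Lemma \ref{lem4.3} is stated pointwise, so one should check that the $\mu_\alpha$'s are well-defined as smooth functions rather than merely as pointwise values; but once we know $B_i=B_j$ on each eigenspace of $A$ at each point, the common value on the $\alpha$-th block equals $\tr(B^{(\alpha)})/k_\alpha$, which is a smooth function of the point and independent of the orthonormal choice made within the block. I do not expect any real obstacle — the result is essentially a bookkeeping corollary of Lemmas \ref{lem4.2} and \ref{lem4.3}, the assumption $t\geq 3$ entering only through its use in Lemma \ref{lem4.3}.
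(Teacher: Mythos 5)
Your proposal is correct and follows essentially the same route as the paper: Lemma \ref{lem4.2} gives the block-diagonal form of $(B_{ij})$ in a frame satisfying \eqref{DiaA}, a pointwise block-orthogonal diagonalization combined with Lemma \ref{lem4.3} shows each block has a single eigenvalue $\mu_\alpha$, and hence each symmetric block equals $\mu_\alpha I_{k_\alpha}$, so the original smooth frame diagonalizing $A$ already diagonalizes $B$. The paper makes this last deduction explicit by writing $B_{(\alpha)}(p)=T_{(\alpha)}^{-1}\cdot{\rm Diag}(\mu_\alpha,\cdots,\mu_\alpha)\cdot T_{(\alpha)}={\rm Diag}(\mu_\alpha,\cdots,\mu_\alpha)$, which also settles the smoothness concern you raise at the end.
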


\begin{proof} Since $A$ is parallel, we can find a local orthonormal
frame field $\{E_i\}$, such that \eqref{DiaA} holds. It then suffices to show that, at any point, the component matrix
$(B_{ij})$ of $B$ with respect to $\{E_i\}$ is diagonal. Note that
$k_1,\dots,k_t$ are the multiplicities of the eigenvalues
$\lambda_1,\cdots,\lambda_t$, respectively. By Lemma \ref{lem4.2}, we can
write
$$
(B_{ij})=\text{Diag}(B_{(1)},\cdots,B_{(t)}),
$$
where $B_{(1)},\cdots,B_{(t)}$ are square matrices of orders
$k_1,\cdots,k_t$, respectively. For any point $p$, we can choose a
suitable orthogonal matrix $T$ of the form
$$
T=\text{Diag}(T_{(1)},\cdots,T_{(t)}),
$$
with $T_{(1)},\cdots,T_{(t)}$ being orthogonal matrices of orders
$k_1,\cdots,k_t$, such that $$T\cdot(B_{ij}(p))\cdot
T^{-1}=\text{Diag}(B_1,\cdots,B_m),$$ where $B_1,\cdots,B_m$ are the
eigenvalues of tensor $B$ at $p$. It then follows from Lemma \ref{lem4.3}
that
$$
B_1=\cdots=B_{k_1}:=\mu_1,\cdots,B_{m-k_t+1}=\cdots=B_m:=\mu_t.
$$
Hence
\begin{align}
&T_{(1)}B_{(1)}(p)T^{-1}_{(1)}=\text{Diag}(\mu_1,\cdots,\mu_1),\\
&\ \ \cdots\cdots\\
&T_{(t)}B_{(t)}(p)T^{-1}_{(t)}=\text{Diag}(\mu_t,\cdots,\mu_t).
\end{align}
Therefore
\be
B_{(1)}(p)=T^{-1}_{(1)}\cdot\text{Diag}(\mu_1,\cdots,\mu_1)\cdot
T_{(1)}=\text{Diag}(\mu_1,\cdots,\mu_1).
\ee
In the same way,
\be
B_{(2)}(p)=\text{Diag}(\mu_2,\cdots,\mu_2),\cdots,
B_{(t)}(p)=\text{Diag}(\mu_t,\cdots,\mu_t). \ee
Thus
$$(B_{ij}(p))=\text{Diag}(\mu_1,\cdots,\mu_1,\cdots,\mu_t,\cdots,\mu_t).$$
\end{proof}

\begin{lem}\label{lem4.5} If $t\geq 3$, then all the conformal principal curvatures
$\mu_1,\cdots,\mu_t$ of $x$ are constant, and hence $x$ is conformal
isoparametric.\end{lem}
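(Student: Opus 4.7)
The plan is to leverage the algebraic identity \eqref{BiBj} established in the proof of Lemma \ref{lem4.3}. By Corollary \ref{cor4.4}, I may work with a local orthonormal frame $\{E_i\}$ in which $A$ and $B$ are simultaneously diagonal with the block structures \eqref{DiaA} and \eqref{DiaB}; then \eqref{BiBj}, applied to indices $i,j$ lying in distinct blocks, specializes to the pointwise identity
\[
\mu_\alpha\mu_\beta \;=\; \lambda_\alpha+\lambda_\beta,\qquad 1\le\alpha\neq\beta\le t.
\]
Because $A$ is parallel, the $\lambda_\alpha$'s are constants, so this is a pointwise system of algebraic equations in the functions $\mu_1,\ldots,\mu_t$ whose right-hand sides are constant.

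The heart of the argument is to rule out the degenerate case $\lambda_\alpha+\lambda_\beta=0$. If such a pair existed, then $\mu_\alpha\mu_\beta\equiv 0$ on $M^m$, so at each point either $\mu_\alpha$ or $\mu_\beta$ vanishes. Suppose $\mu_\alpha(p)=0$; selecting any third index $\gamma$ (which exists since $t\ge 3$) and reading off the relation $\mu_\alpha(p)\mu_\gamma(p)=\lambda_\alpha+\lambda_\gamma$ forces $\lambda_\gamma=-\lambda_\alpha=\lambda_\beta$, contradicting the distinctness of $\lambda_\beta$ and $\lambda_\gamma$; the case $\mu_\beta(p)=0$ is symmetric. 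I expect this to be the main obstacle, and it is the only place where the hypothesis $t\ge 3$ is used in an essential way; everything else is formal algebra.

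Once every pairwise sum $\lambda_\alpha+\lambda_\beta$ is known to be nonzero, I pick any three distinct indices $\alpha,\beta,\gamma$ and combine the three relations to obtain
\[
\mu_\alpha^2 \;=\; \frac{(\lambda_\alpha+\lambda_\beta)(\lambda_\alpha+\lambda_\gamma)}{\lambda_\beta+\lambda_\gamma},
\]
which is a constant. Since $\mu_\alpha$ is a smooth function on $M^m$ (the common eigenvalue of $B$ on the $\lambda_\alpha$-eigendistribution of the parallel tensor $A$) whose square is constant, $\mu_\alpha$ is locally constant and hence constant on each connected component of $M^m$. This yields the constancy of every conformal principal curvature $\mu_1,\ldots,\mu_t$, which is precisely the conformal isoparametric property asserted by the lemma.
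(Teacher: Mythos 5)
Your proof is correct, and it takes a genuinely different and more economical route than the paper's. The paper's argument is differential: using $\omega^i_j=0$ across distinct blocks together with the total symmetry of $B_{ijk}$ (which follows from \eqref{Bijk} and $\Phi\equiv0$), it first shows $E_k(\mu_1)=0$ for $k$ outside the first block and $E_i(B_j)=0$ for mixed indices, and then differentiates the identity $-\mu_1B_j+\lambda_1+A_j=0$ along block-one directions, invoking \eqref{trB} to produce some $B_j\neq0$ and conclude $E_k(\mu_1)=0$ in the remaining directions as well. You instead treat the Gauss-type relations $\mu_\alpha\mu_\beta=\lambda_\alpha+\lambda_\beta$ ($\alpha\neq\beta$) as a pointwise algebraic system with constant right-hand sides and solve it outright: the exclusion of a vanishing sum $\lambda_\alpha+\lambda_\beta$ via a third block is exactly where $t\geq3$ enters (the same trick the paper uses in Lemma \ref{lem4.3} and again to show $t=3$), and the closed formula $\mu_\alpha^2=(\lambda_\alpha+\lambda_\beta)(\lambda_\alpha+\lambda_\gamma)/(\lambda_\beta+\lambda_\gamma)$ plus continuity of $\mu_\alpha$ finishes the job. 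Your version needs no covariant derivatives of $B$ and no appeal to the Codazzi equation \eqref{Bijk}, and it yields the extra information that each $\mu_\alpha$ is nowhere zero. Two small points worth making explicit: the relations $\mu_\alpha\mu_\beta=\lambda_\alpha+\lambda_\beta$ hold identically and not merely at the point where \eqref{BiBj} was first derived, because by Corollary \ref{cor4.4} the frame diagonalizes $A$ and $B$ simultaneously in a whole neighbourhood and $\omega^i_j\equiv0$ across blocks by \eqref{wij0}; and $\mu_\alpha$ is indeed smooth, being $k_\alpha^{-1}\tr\bigl(B|_{V_\alpha}\bigr)$ on the smooth eigendistribution $V_\alpha$ of the parallel tensor $A$, so local constancy of a function with constant nonzero square does apply.
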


\begin{proof} Without loss of generality, we only need to show that $\mu_1$ is constant.
To this end, choose a frame field $\{E_i\}$ such that \eqref{DiaA} and
\eqref{DiaB} hold. Note that, by \eqref{wij0}, when $1\leq i\leq k_1$ and
$j>k_1$, we have
$$
\sum B_{ijk}\omega^k=dB_{ij}-\sum B_{kj}\omega^k_i-\sum
B_{ik}\omega^k_j=0,
$$
which implies that $B_{ijk}=0$.

By Lemma \ref{lem4.1}, $\Phi\equiv 0$. Hence from \eqref{Bijk} one seen that $B_{ijk}$ is symmetric with respect to $i,j,k$. It follows that
$B_{ijk}=0$, in case that two indices in $i,j,k$ are less than
or equal to $k_1$ with the other index larger than $k_1$, or one
index in $i,j,k$ is less than or equal to $k_1$ with the other two
indices larger than $k_1$. In particular, for any $i,j$ satisfying
$1\leq i,j\leq k_1$,
$$
\sum_{k=1}^{k_1} B_{ijk}\omega^k=dB_{ij}-\sum
B_{kj}\omega^k_i-\sum
B_{ik}\omega^k_j=dB_i\delta_{ij}-B_j\omega^j_i-B_i\omega^i_j.
$$
Putting $j=i$, one obtains
\begin{equation}
\sum_{k=1}^{k_1} B_{iik}\omega^k=d\mu_1,
\end{equation}
which implies that
\begin{equation}
\label{Eu10}
E_k(\mu_1)=0,\quad k_1+1\leq k\leq m.
\end{equation}
Similarly,
\begin{equation}
\label{EB0}
E_i(B_j)=0,\quad 1\leq i\leq k_1,\ k_1+1\leq j\leq m.
\end{equation}

On the other hand, we see from \eqref{BiBj} that
\begin{equation}
\label{2}
-\mu_1B_j+\lambda_1+A_j=0, \quad k_1+1\leq j\leq m,
\end{equation}
hold identically. Differentiating \eqref{2} in the direction of $E_k$,
$1\leq k\leq k_1$, and using \eqref{EB0}, we obtain
$$
E_k(\mu_1)B_j=0,\quad 1\leq k\leq k_1,\ k_1+1\leq j\leq m.
$$
By \eqref{trB} there exists some index $j$
such that $k_1+1\leq j\leq m$ and $B_j\neq 0$. Therefore,
$E_k(\mu_1)=0$ for $1\leq k\leq k_1$. This together with \eqref{Eu10}
implies that $\mu_1$ is a constant.\end{proof}

\begin{cor} If $t\geq 3$, then $t=3$ and $B$ is
parallel.\end{cor}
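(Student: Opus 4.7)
The plan is to use the algebraic relation \eqref{BiBj}, namely $\mu_\alpha \mu_\beta = \lambda_\alpha + \lambda_\beta$ for distinct blocks $\alpha \neq \beta$, together with the constancy of all conformal principal curvatures $\mu_\alpha$ just established in Lemma \ref{lem4.5}, to pin down both the number of blocks and the parallelism of $B$ in one stroke.

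\textbf{Step 1: $t=3$.} Since the $\lambda_\alpha$'s are pairwise distinct, for any three distinct indices $\alpha,\beta,\gamma$ the identity $\mu_\alpha\mu_\gamma-\mu_\beta\mu_\gamma=\lambda_\alpha-\lambda_\beta\neq 0$ forces $\mu_\alpha\neq\mu_\beta$; thus the $\mu_\alpha$'s are themselves pairwise distinct. Suppose now for contradiction that $t\geq 4$, and pick four distinct indices $\alpha,\beta,\gamma,\delta$. From \eqref{BiBj} applied to the pairs $(\alpha,\gamma),(\alpha,\delta)$ and $(\beta,\gamma),(\beta,\delta)$ respectively, one obtains
$$
\mu_\alpha(\mu_\gamma-\mu_\delta)=\lambda_\gamma-\lambda_\delta=\mu_\beta(\mu_\gamma-\mu_\delta),
$$
hence $(\mu_\alpha-\mu_\beta)(\mu_\gamma-\mu_\delta)=0$, which contradicts the distinctness of the $\mu$'s. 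Therefore $t=3$.

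\textbf{Step 2: $B$ is parallel.} By Corollary \ref{cor4.4} there is a local orthonormal frame $\{E_i\}$ in which \eqref{DiaA} and \eqref{DiaB} hold \emph{identically}, so that $B_{ij}=\mu_\alpha\delta_{ij}$ whenever $i,j$ are both in the $\alpha$-th block and $B_{ij}=0$ whenever $i,j$ lie in different blocks. The plan is to read off $B_{ijk}$ directly from the defining formula
$$
\sum_k B_{ijk}\,\omega^k = dB_{ij}-\sum_\ell B_{\ell j}\,\omega^\ell_i-\sum_\ell B_{i\ell}\,\omega^\ell_j
$$
and verify that it vanishes case-by-case: if $i,j$ lie in the same block $\alpha$, then $dB_{ij}$ is either $d\mu_\alpha=0$ or $d(0)=0$ (using Lemma \ref{lem4.5}), and the two correction sums collapse to $-\mu_\alpha(\omega^j_i+\omega^i_j)$ (when $i\neq j$) or to $-2\mu_\alpha\omega^i_i$ (when $i=j$), both of which are zero by antisymmetry of the connection forms; if $i,j$ lie in different blocks $\alpha\neq\beta$, then $dB_{ij}=0$ and the correction terms all carry factors $\omega^\ell_i$ or $\omega^\ell_j$ with indices straddling two distinct $A$-eigenspaces, so they vanish by \eqref{wij0}. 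Hence $B_{ijk}\equiv 0$.

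\textbf{Anticipated difficulties.} Step 1 is a clean finite-dimensional algebra argument, essentially immediate from \eqref{BiBj} once one observes that the $\mu_\alpha$ must also be distinct. The only subtlety in Step 2 is the same-block off-diagonal case $i\neq j$, $A_i=A_j$: here neither $\omega^i_j$ nor $B_{ij}$ is pinned down by our previous lemmas individually, and it is the proportionality of $B|_{\text{block}}$ to the identity that makes the two correction sums cancel via antisymmetry. This is the one place where one really uses the full strength of Corollary \ref{cor4.4} rather than just the weaker block-diagonal decomposition of Lemma \ref{lem4.2}.
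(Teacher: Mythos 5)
Your proposal is correct and follows essentially the same route as the paper: the $t=3$ step is the same algebraic manipulation of four instances of \eqref{BiBj} (the paper contradicts the distinctness of the $\lambda$'s via $(A_{i_1}-A_{i_4})(A_{i_2}-A_{i_3})=0$, while you first note the $\mu$'s are distinct and contradict that instead), and your Step 2 simply writes out in full the computation the paper compresses into the one-line citation of \eqref{wij0}, Corollary \ref{cor4.4} and Lemma \ref{lem4.5}.
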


\begin{proof} Indeed, the conclusion that $B$ is parallel comes from
\eqref{wij0}, Corollary \ref{cor4.4} and Lemma \ref{lem4.5}.

If $t>3$, then there exist at least four indices $i_1,i_2,i_3,i_4$,
such that $A_{i_1},A_{i_2},A_{i_3},A_{i_4}$ are distinct each other.
Then it follows from \eqref{BiBj} that
\begin{align}
&-B_{i_1}B_{i_2} +A_{i_1}+A_{i_2}=0,\quad -B_{i_3}B_{i_4}
+A_{i_3}+A_{i_4}=0,\\
&-B_{i_1}B_{i_3} +A_{i_1}+A_{i_3}=0,\quad - B_{i_2}B_{i_4}
+A_{i_2}+A_{i_4}=0.
\end{align}
Consequently, we obtain $(A_{i_1}-A_{i_4})(A_{i_2}-A_{i_3})=0$, a
contradiction.\end{proof}

\begin{lem} If $t\leq 2$ and $B$ is not parallel, then one of
the following cases holds:

\hs{.6cm}$(1)$ $t=1$ and $x$ is conformal isotropic.

\hs{.6cm}$(2)$ $t=2$, $\lambda_1+\lambda_2=0$ and $B_i=0$ either for
all $1\leq i\leq k_1$, or for all $k_1+1\leq i\leq m$.\end{lem}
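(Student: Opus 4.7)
The plan is to split on the value of $t$. The case $t=1$ is immediate: the single eigenvalue $\lambda_1$ of $A$ gives $A_{ij}=\lambda_1\delta_{ij}$, i.e.\ $A=\lambda_1 g$, and together with $\Phi\equiv 0$ from Lemma~\ref{lem4.1} this says exactly that $x$ is conformal isotropic, yielding (1).

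For $t=2$, I first choose a local orthonormal frame $\{E_i\}$ that simultaneously diagonalizes $A$ and $B$. By Lemma~\ref{lem4.2}, $B$ is automatically block diagonal in any frame adapted to the eigenspace decomposition of $A$, and each symmetric block can be orthogonally diagonalized within the corresponding $A$-eigenspace without disturbing $A$. Indexing so that $A_i=\lambda_1$ for $1\le i\le k_1$ and $A_j=\lambda_2$ for $k_1+1\le j\le m$, the same computation used in Lemma~\ref{lem4.3} (namely $\omega^i_j=0$ from \eqref{wij0}, then expansion of $R_{ijji}$ via \eqref{Rijkl}) yields
\begin{equation}\label{eq:BBsum}
-B_iB_j+\lambda_1+\lambda_2=0,\qquad 1\le i\le k_1<j\le m.
\end{equation}
Setting $c:=\lambda_1+\lambda_2$, the goal is to force $c=0$ and then to read off the vanishing of one block from the resulting identity $B_iB_j=0$.

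Suppose for contradiction that $c\ne 0$. Then \eqref{eq:BBsum} forces all $B_i$ with $i\le k_1$ to share a common value $\mu_1$ and all $B_j$ with $j>k_1$ to share a common value $\mu_2$, with $\mu_1\mu_2=c$. Because these common values are the eigenvalues of $B$ restricted to the parallel eigenbundles $E_a=\ker(A-\lambda_a g)$, they are intrinsically defined smooth functions on $M$. The trace identity \eqref{trB} reads $k_1\mu_1+k_2\mu_2=0$, which combined with $\mu_1\mu_2=c$ gives $\mu_1^2=-ck_2/k_1$, a constant. If $c>0$ this is absurd; if $c<0$, the continuous function $\mu_1$ is nowhere zero with constant square, hence itself a constant on the connected $M$, and then so is $\mu_2$. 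Writing $B=\mu_1 P_1+\mu_2 P_2$ with $P_a$ the orthogonal projection onto $E_a$, and noting that each $P_a$ is parallel because $A$ is parallel with constant distinct eigenvalues, we obtain $\nabla B=0$, contradicting the hypothesis. Thus $c=0$.

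Finally, with $\lambda_1+\lambda_2=0$ the identity \eqref{eq:BBsum} becomes $B_iB_j=0$ for $i\le k_1<j$, so at every point all the $B_i$ in one of the two blocks must vanish. Setting $F_a:=\mathrm{tr}\bigl((B|_{E_a})^2\bigr)$, a smooth invariant of the block, this reads $F_1F_2\equiv 0$, while $F_1+F_2=|B|^2=(m-1)/m>0$ by \eqref{trB}. The open sets $\{F_1>0\}$ and $\{F_2>0\}$ are therefore disjoint and together cover $M$, so connectedness of $M$ forces one of them to be all of $M$, giving conclusion~(2). The main obstacle is the $c\ne 0$ step: converting the pointwise algebraic constraints on $B$ into global parallelism of $B$ via the parallel eigenbundle decomposition induced by the parallel tensor $A$.
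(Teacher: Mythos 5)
Your proof is correct, and for the decisive step it takes a genuinely different route from the paper's. Both arguments reduce, via \eqref{wij0} and \eqref{Rijkl}, to the cross-block relation $-B_iB_j+\lambda_1+\lambda_2=0$ (the paper's \eqref{4}), and both observe that if some cross pair $B_{i_0}B_{j_0}$ is nonzero (equivalently, since every cross product equals $\lambda_1+\lambda_2$, if $\lambda_1+\lambda_2\neq 0$) then $B$ takes a single value $\mu_a$ on each $A$-eigenbundle. Where you diverge is in proving that $\mu_1,\mu_2$ are constant: the paper reruns the differentiation argument of Lemma \ref{lem4.5}, using $\Phi\equiv 0$, the total symmetry of $B_{ijk}$ from \eqref{Bijk}, and the vanishing of the cross-block connection forms, whereas you get constancy purely algebraically from $k_1\mu_1+k_2\mu_2=0$ (from \eqref{trB}) together with $\mu_1\mu_2=\lambda_1+\lambda_2=\mathrm{const}$, which pins down $\mu_1^2$ and hence, by continuity and nonvanishing on connected $M$, $\mu_1$ itself; your computation even disposes of the case $\lambda_1+\lambda_2>0$ outright. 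Both proofs then conclude $\nabla B=0$ from the parallelism of the eigenbundle projections induced by the parallel $A$, contradicting the hypothesis. Your version is shorter, avoids repeating the Codazzi computation, and supplies a detail the paper leaves implicit: the connectedness argument with $F_1F_2\equiv 0$ and $F_1+F_2=(m-1)/m>0$ shows that the \emph{same} block of $B$ vanishes at every point, not merely one block at each point.
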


\begin{proof} Note that $\Phi\equiv 0$. Thus $x$ is conformal isotropic if and only if $t=1$.

If $t=2$, then for any point $p\in M^m$, we can find an orthonormal frame field
$\{E_i\}$ such that \eqref{DADB} holds at $p$.

By \eqref{wij0}, we see that
\begin{equation}
\label{3}
\omega^i_j=0,\quad 1\leq i\leq k_1,\quad k_1+1\leq j\leq m,
\end{equation}
hold identically. Taking exterior
differentiation of \eqref{3} and making use of \eqref{Rijkl}, we find that,
at $p$
\begin{equation}
\label{4}
-B_iB_j+A_i+A_j=0,\quad 1\leq i\leq k_1,\quad k_1+1\leq j\leq m.
\end{equation}
If there exist one pair of indices $i_0,j_0$ satisfying $1\leq
i_0\leq k_1$, $k_1+1\leq j_0\leq m$ such that $B_{i_0}\neq 0$ and
$B_{j_0}\neq 0$, then for each index $i$ satisfying $1\leq i\leq
k_1$, we obtain
$$
-B_{i_0}B_{j_0}+A_{i_0}+A_{j_0}=0,\quad -B_iB_{j_0}+A_i+A_{j_0}=0,
$$
from which it follows that $(B_i-B_{i_0})B_{j_0}=0$, or
equivalently
$$B_i=B_{i_0},\quad 1\leq i\leq k_1.$$
Similarly, we obtain
$$B_j=B_{j_0},\quad k_1+1\leq j\leq m.$$
Consequently, \eqref{DiaB} also holds in the case that $t=2$. Now, an argument similar to that in the proof of Lemma \ref{lem4.5} shows that the conformal principal curvatures $B_i$ are all constant. Therefore $B$ is
parallel by \eqref{3}, contradicting to the assumption. Thus either
$B_i=0$ for all indices $i$ satisfying $1\leq i\leq k_1$, or $B_j=0$
for all indices $j$ satisfying $k_1+1\leq j\leq m$. In both cases we
have, by \eqref{4}, $\lambda_1+\lambda_2=0$.\end{proof}

\textsc{Proof of Theorem 1.2.} By Theorem \ref{nie1} and Theorem \ref{nie2}, it clearly
suffices to consider the case that $x$ neither is conformal isotropic nor
has parallel conformal second fundamental form. Hence from those Lemmas
proved in this section, we can suppose without loss of generality
that
\begin{equation}
\label{t2}
t=2,\quad \lambda_1=-\lambda_2=\lambda\neq 0,\quad
B_{k_1+1}=\cdots=B_m=0.
\end{equation}
Since $\sum B_i=0$ and $\sum B^2_i=(m-1)/m$, one sees easily that
$m\geq 3$. Since $A$ is parallel, the tangent bundle $TM^m$ of $M^m$
has a decomposition $TM^m=V_1\oplus V_2$, where $V_1$ and $V_2$ are
the eigenspaces of $A$ corresponding to the eigenvalues
$\lambda_1=\lambda$ and $\lambda_2=-\lambda$, respectively.

Let $\{E_i\,;1\leq i\leq k_1\}$ and $\{E_j\,;k_1+1\leq j\leq m\}$ be
orthonormal frame fields for subbundles $V_1$ and $V_2$,
respectively. Then $\{E_i\,;1\leq i\leq m\}$ is an orthonormal frame
field on $M^m$ with respect to the conformal metric $g$. Then \eqref{3}
implies that both $V_1$ and $V_2$ are integrable, and thus
Riemannian manifold $(M^m,g)$ can be locally decomposed into a
direct product of two Riemannian manifolds $(M_1,g_1)$ and
$(M_2,g_2)$, that is, as a Riemannian manifold, locally
\begin{equation}
\label{Mg}
(M^m,g)=(M_1,g_1)\times (M_2,g_2).
\end{equation}
It follows from \eqref{Rijkl}, \eqref{DiaA}, \eqref{t2} and \eqref{Mg} that the Riemannian
curvature tensors of $(M_1,g_1)$ and $(M_2,g_2)$ have the following
components, respectively,
\begin{equation}
\label{M1R}
R_{ijkl}=2\lambda (\delta_{il}\delta_{jk}-\delta_{ik}\delta_{jl})
+(B_{ik}B_{jl}-B_{il}B_{jk}),\quad 1\leq
i,j,k,l\leq k_1,
\end{equation}
\begin{equation}
\label{M2R}
R_{ijkl}=-2\lambda
(\delta_{il}\delta_{jk}-\delta_{ik}\delta_{jl}),\quad k_1+1\leq
i,j,k,l\leq m.
\end{equation}
Thus $(M_2,g_2)$ is of constant sectional curvature $-2\lambda$.

Next we consider the following cases separately.

Case (1): $\lambda>0$. In this case, set $r=(2\lambda)^{-1/2}$. Then
$(M_2,g_2)$ can be locally identified with
$\mathbb{H}^{m-k_1}\left(-\fr1{r^2}\right)$. Let $\td y=(\td y_0,\td
y_2):\mathbb{H}^{m-k_1}\left(-\fr1{r^2}\right)\to\bbr^{m-k_1+1}_1$ be the
canonical embedding.

Since $h=\sum_{i,j=1}^{k_1}B_{ij}\omega^i\omega^j$ is a Codazzi
tensor on $(M_1,g_1)$, it follows from \eqref{M1R} that there exists a
maximal immersed hypersurface
$$\td y_1:(M_1,g_1)\to
\mathbb{S}^{k_1+1}_1(r)\subset\bbr^{k_1+2}_1,\quad 2\leq k_1\leq m-1,$$
which has $h$ as its second fundamental form. Clearly, $\td y_1$  has constant scalar curvature
$$S_1=\fr{mk_1(k_1-1)+(m-1)r^2}{mr^2},$$
and $M^m$ can be locally identified with $\td M^m=(M_1,g_1)\times \mathbb{H}^{m-k_1}(-\fr1{r^2})$.

Define $\td x_1={\td y_1}/{\td y_0}$, $\td x_2={\td y_2}/{\td y_0}$
and $\td x=(\td x_1,\td x_2)$. Then, by the discussion in Example
3.2, $\td x:\td M^m\to \mathbb{S}^{m+1}_1$ yields a regular space-like hypersurface with
the given $g$ and $B$ as its conformal metric and conformal second fundamental
form, respectively. Therefore, by Theorem \ref{thm2.4}, $x$ is conformal equivalent
to $\td x$.

Case (2): $\lambda<0$. In this case, set $r=(-2\lambda)^{-1/2}$,
then $(M_2,g_2)$ can be locally identified with $\mathbb{S}^{m-k_1}(r)$. Let
$\td y_2:\mathbb{S}^{m-k_1}(r)\to\bbr^{m-k_1+1}$ be the canonical embedding.

Since $h=\sum_{i,j=1}^{k_1}B_{ij}\omega^i\omega^j$ is a Codazzi
tensor on $(M_1,g_1)$, it follows from \eqref{M1R} that there exists a
maximal immersed hypersurface
$$\td y=(\td y_0,\td y_1):(M_1,g_1)\to
\mathbb{H}^{k_1+1}_1\left(-\fr1{r^2}\right)\subset\bbr^{k_1+2}_2,\quad 2\leq
k_1\leq m-1,$$
which has $h$ as its second fundamental form. Clearly, $\td y$ has constant scalar curvature
$$S_1=\fr{-mk_1(k_1-1)+(m-1)r^2}{mr^2},$$
and $M^m$ can be locally identified with $\td M^m=(M_1,g_1)\times
\mathbb{S}^{m-k_1}\left(r\right)$.

Assume without loss of generality that $\td y_0\neq 0$. Define $\veps=\sgn(\td y_0)$ and let $\td x_1=\veps{\td y_1}/{\td y_0}$, $\td x_2=\veps{\td y_2}/{\td y_0}$
and $\td x=(\td x_1,\td x_2)$. Then, by the discussion in Example
3.3, $\td x:\td M^m\to \mathbb{S}^{m+1}_1$ defines a regular space-like hypersurface
with the given $g$ and $B$ as its conformal metric and conformal second
fundamental form, respectively. It follows by Theorem \ref{thm2.4} that $x$ is
conformal equivalent to $\td x$.\hfill $\Box$

\end{document}